\newcommand{\RR}{\mathbb{R}}
\newcommand{\ints}{\int\limits}
\newcommand{\lam}{\lambda}
\newcommand{\ep}{\varepsilon}
\newcommand{\de}{\partial}
\newcommand{\zi}{\zeta}
\newcommand{\OO}{\mathcal{O}}
\newcommand{\RE}{{\rm{Re}}}
\newtheorem{remark}[theorem]{{\it Remark}}
\title{Analytical results for 2-D non-rectilinear waveguides based on the Green's function}
\author{Giulio Ciraolo \thanks{Dipartimento di Matematica e
Applicazioni per l'Architettura, Universit\`a di Firenze, Piazza
Ghiberti 27, 50122 Firenze, Italy, ({\tt ciraolo@math.unifi.it}).}
\and Rolando Magnanini \thanks{Dipartimento di matematica U. Dini,
Universit\`a di Firenze, Viale Morgagni 67/A, 50134 Firenze, Italy,
({\tt magnanin@math.unifi.it}).}}
\begin{document}

\maketitle

\begin{abstract}
We consider the problem of wave propagation for a 2-D rectilinear
optical waveguide which presents some perturbation. We construct a
mathematical framework to study such a problem and prove the
existence of a solution for the case of small imperfections. Our
results are based on the knowledge of a Green's function for the
rectilinear case.
\end{abstract}

\begin{keywords}
Wave propagation, optical waveguides, Green's function, perturbation
methods.
\end{keywords}

\begin{AMS}
78A50, 35Q60, 35A05, 35B20, 35J05, 35P05, 47A55.
\end{AMS}

\pagestyle{myheadings} \thispagestyle{plain} \markboth{G. CIRAOLO
AND R. MAGNANINI}{ANALYTICAL RESULTS FOR NON-RECTILINEAR WAVEGUIDES}

\section{Introduction}
An optical waveguide is a dielectric structure which guides and
confines an optical signal along a desired path. Probably, the best
known example is the optical fiber, where the light signal is
confined in a cylindrical structure. Optical waveguides are largely
used in long distance communications, integrated optics and many
other applications.

In a rectilinear optical waveguide, the central region (the {\it
core}) is surrounded by a layer with a lower index of refraction
called {\it cladding}. A protective {\it jacket} covers the
cladding. The difference between the indices of refraction of core
and cladding makes possible to guide an optical signal and to
confine its energy in proximity of the core.

In recent years, the growing interest in optical integrated circuits
stimulated the study of waveguides with different geometries. In
fact, electromagnetic wave propagation along perturbed waveguides is
still continuing to be widely investigated because of its importance
in the design of optical devices, such as couplers, tapers,
gratings, bendings imperfections of structures and so on.

In this paper we propose an analytical approach to the study of
non-rectilinear waveguides. In particular, we will assume that the
waveguide is a small perturbation of a rectilinear one and, in such
a case, we prove a theorem which guarantees the existence of a
solution.

There are two relevant ways of modeling wave propagation in optical
waveguides. In {\it closed waveguides} one considers a tubular
neighbourhood of the core and imposes Dirichlet, Neumann or Robin
conditions on its boundary (see \cite{Ol} and references therein).
The use of these boundary conditions is efficient but somewhat
artificial, since it creates spurious waves reflected by the
interface jacket-cladding. In this paper we will study {\it open
waveguides}, i.e. we will assume that the cladding (or the jacket)
extends to infinity. This choice provides a more accurate model to
study the energy radiated outside the core (see \cite{SL} and
\cite{Ma}).

Thinking of an optical signal as a superposition of waves of
different frequency (the \emph{modes}), it is observed that in a
rectilinear waveguide most of the energy provided by the source
propagates as a finite number of such waves (the \emph{guided
modes}). The guided modes are mostly confined in the core; they
decay exponentially transversally to the waveguide's axis and
propagate along that axis without any significant loss of energy.
The rest of the energy (the \emph{radiating energy}) is made of
\emph{radiation} and \emph{evanescent} modes, according to their
different behaviour along the waveguide's axis (see \S \ref{section
preliminaries} for further details). The electromagnetic field can
be represented as a discrete sum of guided modes and a continuous
sum of radiation and evanescent modes.

As already mentioned, in this paper we shall present an analytical
approach to the study of time harmonic wave propagation in perturbed
2-D optical waveguides. As a model equation, we will use the
following \emph{Helmholtz equation} (or \emph{reduced wave
equation}):
\begin{equation} \label{helm}
\Delta u(x,z) + k^2 n(x,z)^2 u(x,z) = f(x,z),
\end{equation}
with $(x,z)\in\RR^2$, where $n(x,z)$ is the index of refraction of
the waveguide, $k$ is the wavenumber and $f$ is a function
representing a source. The axis of the waveguide is assumed to be
the $z$ axis, while $x$ denotes the transversal coordinate.

Our work is strictly connected to the results in \cite{MS}, where
the authors derived a resolution formula for \eqref{helm}, obtained
as a superposition of guided, radiation and evanescent modes, in the
case in which the function $n$ is of the form
\begin{equation}\label{n}
  n:=n_0 (x) = \begin{cases} n_{co}(x), & |x|\leq h,\\
  n_{cl}, & |x| > h, \end{cases}
\end{equation}
where $n_{co}$ is a bounded function decreasing along the positive
direction and $2h$ is the width of the core. Such a choice of $n$
corresponds to an index of refraction depending only on the
transversal coordinate and, thus, \eqref{helm} describes the
electromagnetic wave propagation in a rectilinear open waveguide. By
using the approach proposed in \cite{AC1}, the results in \cite{MS}
have been generalized in \cite{Ci1} to the case in which the index
of refraction is not necessarily decreasing along the positive
direction. The use of a rigorous transform theory guarantees that
the superposition of guided, radiation and evanescent modes is
complete. Such results are recalled in \S \ref{section
preliminaries}. The problem of studying the uniqueness of the
obtained solution and its outgoing nature will be addressed
elsewhere.

In this paper we shall study small perturbations of rectilinear
waveguides and present a mathematical framework which allows us to
study the problem of wave propagation in perturbed waveguides. In
particular, we shall assume that it is possible to find a
diffeomorphism of $\RR^2$ such that the non-rectilinear waveguide is
mapped in a rectilinear one. Thanks to our knowledge of a Green's
function for the rectilinear case, we are able to prove the
existence of a solution for small perturbations of 2-D rectilinear
waveguides by using the contraction mapping theorem.

In order to use such theorem, we shall prove that the inverse of the
operator obtained by linearizing the problem is continuous (see
Theorem \ref{teo esistenza}). Such a problem has been solved by
using weighted Sobolev spaces, which are commonly used when dealing
with Helmholtz equation (see, for instance, \cite{Le}).

In a forthcoming work, the results obtained in this paper will be
used to show several numerical results interesting for the
applications.

In \S \ref{section framework description} we describe our
mathematical framework for studying non-rectilinear waveguides.
Since our results are based on the knowledge of a Green's function
for rectilinear waveguides, in \S \ref{section preliminaries} we
recall the main results obtained in \cite{MS}.

Section \ref{section asympt lemmas} will be devoted to some
technical lemmas needed in \S \ref{Section proofs main theorems}.
The existence of a solution for the problem of perturbed waveguides
will be proven in Theorem \ref{teo esistenza}. Crucial to our
construction are the estimates contained in \S \ref{Section proofs
main theorems}, in particular the ones in Theorem \ref{lemma L0-1}.

Appendix \ref{section estimates RN} contains results on the global
regularity for solutions of the Helmholtz equation in $\RR^N$,
$N\geq 2$, that we need in Theorem \ref{teo esistenza}.

\section{Non-rectilinear waveguides: framework description}\label{section framework description}
When a rectilinear waveguide has some imperfection or the waveguide
slightly bends from the rectilinear position, we cannot assume that
its index of refraction $n$ depends only on the transversal
coordinate $x$. From the mathematical point of view, in this case,
we shall study the Helmholtz equation
\begin{equation}\label{helm xz ch3}
\Delta u + k^2 n_\ep(x,z)^2 u =f, \quad \textmd{in } \RR^2,
\end{equation}
where $n_\ep(x,z)$ is a perturbation of the function $n_0(x)$
defined in \eqref{n}, representing a ``perfect'' rectilinear
configuration.

We denote by $L_0$ and $L_\ep$ the Helmholtz operators corresponding
to $n_0(x)$ and $n_\ep(x,z)$ respectively:
\begin{equation}\label{L0 Lep}
L_0= \Delta + k^2 n_0 (x)^2,\quad  L_\ep= \Delta + k^2 n_\ep
(x,z)^2.
\end{equation}

In \cite{MS}, the authors found a resolution formula for
\begin{equation*}
  L_0 u = f,
\end{equation*}
i.e. they were able to write explicitly (in terms of a Green's
function) the operator $L_0^{-1}$ and then a solution of
\eqref{helm}. Now, we want to use $L_0^{-1}$ to write higher order
approximations of solutions of \eqref{helm xz ch3}, i.e. of
\begin{equation} \label{Lep u f ch3}
  L_\ep u = f.
\end{equation}

The existence of a solution of \eqref{Lep u f ch3} will be proven in
Theorem \ref{teo esistenza} by using a standard fixed point
argument: since \eqref{Lep u f ch3} is equivalent to
\begin{equation*}
  L_0 u = f + (L_0 - L_\ep) u,
\end{equation*}
then we have
\begin{equation*}
  u= L_0^{-1} f + \ep L_0^{-1} \left( \frac{L_0 - L_\ep}{\ep} \right) u.
\end{equation*}

Our goal is to find suitable function spaces on which $L_0^{-1}$ and
$\frac{L_0 - L_\ep}{\ep}$ are continuous; then, by choosing $\ep$
sufficiently small, the existence of a solution will follow by the
contraction mapping theorem.

It is clear that this procedure can be extended to more general
elliptic operators; in \S \ref{Section proofs main theorems} we will
provide the details.

\section{A Green's function for rectilinear waveguides} \label{section preliminaries}
In this section we recall the expression of the Green's formula
obtained by Magnanini and Santosa in \cite{MS} and generalized in
\cite{Ci1} to a non-symmetric index of refraction.

We look for solutions of the homogeneous equation associated to
\eqref{helm} in the form
\begin{equation*}
  u(x,z)=v(x,\lam) e^{ik\beta z};
\end{equation*}
$v(x,\lam)$ satisfies the associated eigenvalue problem for $v$:
\begin{equation}\label{eq v ch2}
  v'' + [\lam - q(x)] v = 0, \quad \textmd{in } \RR,
\end{equation}
with
\begin{equation} \label{n* lambda q}
n_*=\max_{\RR}{n},\quad \lam = k^2(n_*^2 - \beta^2),\quad q(x) = k^2
[ n_*^2 - n(x)^2 ].
\end{equation}
The solutions of \eqref{eq v ch2} can be written in the following
form
\begin{equation}\label{vj ch2}
v_j(x,\lam)=
  \begin{cases}
    \phi_j(h,\lam) \cos Q (x-h) + \frac{\phi_j'(h,\lam)}{Q} \sin Q(x-h), & \text{if } x > h, \\
    \phi_j(x,\lam), & \text{if } |x| \leq h, \\
    \phi_j(-h,\lam) \cos Q (x+h) + \frac{\phi_j'(-h,\lam)}{Q} \sin Q(x+h), & \text{if } x < -h, \\
  \end{cases}
\end{equation}
for $j=s,a$, with $Q=\sqrt{\lam-d^2}$, $d^2=k^2(n_*^2-n_{cl}^2)$ and
where the $\phi_j$'s are solutions of \eqref{eq v ch2} in the
interval $(-h,h)$ and satisfy the following conditions:
\begin{equation}\label{cond iniziali v ch2}
  \begin{array}{cc}
    \phi_s(0,\lam)=1, & \phi_s'(0,\lam)=0, \\
    \phi_a(0,\lam)=0, & \phi_a'(0,\lam)=\sqrt{\lam}.
  \end{array}
\end{equation}
The indices $j=s,a$ correspond to symmetric and antisymmetric
solutions, respectively.

\vspace{1em}

\begin{remark}\label{remark classification
solutions} (Classification of solutions).  {\rm The eigenvalue
problem \eqref{eq v ch2} leads to three different types of solutions
of \eqref{helm} of the form $u_\beta(x,z)=v(x,\lam) e^{ik\beta z}$.
\begin{itemize}
\item {\sl Guided modes}: $0 < \lam < d^2$. It exists a finite number
of eigenvalues $\lam_m^j$, $m=1,\ldots, M_j$, satisfying the
equations
\begin{equation*}
\sqrt{d^2-\lam} \; \phi_j(h,\lam) + \phi_j'(h,\lam) = 0, \quad
j\in\{ s,a \},
\end{equation*}
and corresponding eigenfunctions $v_j(x,\lam_m^j)$ which satisfy
\eqref{eq v ch2}. In this case, $v_j(x,\lam_m^j)$ decays
exponentially for $|x| > h$:
\begin{equation*}
v_j(x,\lam_m^j) = \begin{cases} \phi_j(h,\lam_m^j) e^{- \sqrt{d^2 -\lam_m^j} (x-h)}, & x > h, \\
\phi_j(x,\lam_m^j), & |x|\leq h,\\
\phi_j(-h,\lam_m^j) e^{\sqrt{d^2 -\lam_m^j} (x+h)}, & x < - h.
\end{cases}
\end{equation*}
In the $z$ direction, $u_\beta$ is bounded and oscillatory, because
$\beta$ is real.

\item {\sl Radiation modes}: $d^2 < \lam < k^2 n_*^2$. In this case,
$u_\beta$ is bounded and oscillatory both in the $x$ and $z$
directions.

\item {\sl Evanescent modes}: $ \lam > k^2 n_*^2$. The functions $v_j$
are bounded and oscillatory. In this case $\beta$ becomes imaginary
and hence $u_\beta$ decays exponentially in one direction along the
$z$-axis and increases exponentially in the other one.
\end{itemize}
}
\end{remark}

\vspace{1em}

By using the theory of Titchmarsh on eigenfunction expansions, we
can write a Green's function for \eqref{helm} as superposition of
guided, radiation and evanescent modes:
\begin{equation}\label{Green ch2}
  G(x,z;\xi,\zi) = \sum_{j\in\{s,a\}} \ints_{0}^{+\infty} \frac{e^{i|z-\zi| \sqrt{k^2n_*^2 - \lam}}}{2i
  \sqrt{k^2n_*^2 -\lam}} v_j(x,\lam) v_j(\xi,\lam) d\rho_j(\lam),
\end{equation}
with
\begin{equation*}
  \langle d\rho_{j}, \eta \rangle = \sum_{m=1}^{M_j} r_m^j \eta (\lam_m^j) + \frac{1}{2\pi} \ints_{d^2}^{+\infty}
  \frac{\sqrt{\lam -d^2}}{(\lam - d^2) \phi_j(h,\lam)^2 + \phi_j'(h,\lam)^2} \eta(\lam)
  d\lam,
\end{equation*}
for all $\eta\in C_0^\infty(\RR)$, where
\begin{equation*}
  r_m^j = \left[ \ints_{-\infty}^{+\infty} v_j(x,\lam_m^j)^2 dx \right]^{-1} =
  \frac{\sqrt{d^2-\lam_m^j}}{ \sqrt{d^2-\lam_m^j} \ints_{-h}^h \phi_j(x,\lam_m^j)^2 dx +
  \phi_j(h,\lam_m^j)^2}.
\end{equation*}
and where $v_j(x,\lam)$ are defined by \eqref{vj ch2} (see
\cite{Ci1} for further details).

We notice that \eqref{Green ch2} can be split up into three summands
\begin{equation*}
  G=G^g + G^r + G^e,
\end{equation*}
where
\begin{subequations}
\begin{equation}\label{G^g ch2}
 G^g(x,z;\xi,\zi) = \sum_{j\in \{ s,a \}} \sum_{m=1}^{M_j} \frac{e^{i|z-\zi|
 \sqrt{k^2 n_*^2 - \lam_m^j}}}{2i \sqrt{k^2n_*^2 -\lam_m^j}}
 v_j(x,\lam_m^j) v_j(\xi,\lam_m^j) r_m^j,
\end{equation}
\begin{equation}\label{G^r ch2}
 G^{r} (x,z;\xi,\zi) = \frac{1}{2\pi} \sum_{j\in\{s,a\}} \ints_{d^2}^{k^2 n_{*}^2}
 \frac{e^{i|z-\zi| \sqrt{k^2n_*^2 - \lam}}}{2i \sqrt{k^2n_*^2 -\lam}} v_j(x,\lam)
 v_j(\xi,\lam) \sigma_j(\lam) d\lam,
\end{equation}
\begin{equation}\label{G^e ch2}
 G^{e} (x,z;\xi,\zi) = -\frac{1}{2\pi} \sum_{j\in\{s,a\}} \ints_{k^2 n_{*}^2}^{+\infty}
 \frac{e^{-|z-\zi| \sqrt{\lam - k^2n_*^2}}}{2\sqrt{\lam - k^2n_*^2}}
 v_j(x,\lam) v_j(\xi,\lam) \sigma_j(\lam) d\lam,
\end{equation}
\end{subequations} with
\begin{equation}\label{sigma ch2}
\sigma_j(\lam) = \frac{\sqrt{\lam -d^2}}{(\lam - d^2)
\phi_j(h,\lam)^2 + \phi_j'(h,\lam)^2}.
\end{equation}
$G^g$ represents the guided part of the Green's function, which
describes the guided modes, i.e. the modes propagating mainly inside
the core; $G^r$ and $G^e$ are the parts of the Green's function
corresponding to the radiation and evanescent modes, respectively.
The radiation and evanescent components altogether form the
radiating part $G^{rad}$ of $G$:
\begin{equation}\label{G^rad ch2}
  G^{rad} = G^r + G^e = \frac{1}{2\pi}  \sum_{j\in\{s,a\}} \ints_{d^2}^{+\infty}
  \frac{e^{i|z-\zi| \sqrt{k^2n_*^2 - \lam}}}{2i \sqrt{k^2n_*^2 -\lam}}
  v_j(x,\lam) v_j(\xi,\lam) \sigma_j(\lam) d\lam.
\end{equation}

\vspace{2em}

\section{Asymptotic Lemmas} \label{section asympt lemmas}
This section contains some lemmas which will be useful in the rest
of the paper.

\vspace{1em}

\begin{lemma}\label{lemma v equilim}
Let $q\in L^1_{loc} (\RR)$ and $\lam_0=\min(\lam_1^s,\lam_1^a)$,
where $\lam_1^s$ and $\lam_1^a$ are defined in Remark \ref{remark
classification solutions}. Let $\phi_j$, $j\in \{s,a\}$, be defined
by \eqref{vj ch2}. Then, the following estimates hold for $x\in
[-h,h]$ and $\lam \geq \lam_0$:
\begin{equation}\label{phi phi' bounded}
|\phi_j(x,\lam)| \leq \Phi_*, \quad |\phi_j'(x,\lam)| \leq \Phi_*
\sqrt{\lam},
\end{equation}
where
\begin{equation}\label{PHI_M}
\Phi_* := \exp \left\{ \frac{1}{2\sqrt{\lam_0}} \ints_{-h}^h |q(t)|
dt \right\} .
\end{equation}
\end{lemma}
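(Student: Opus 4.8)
The plan is to control the pair $(\phi_j,\phi_j'/\sqrt{\lam})$ simultaneously by means of an energy functional and a Gronwall argument, exploiting the fact that the lower bound $\lam\geq\lam_0>0$ keeps the weight $1/\sqrt{\lam}$ under control. For fixed $\lam\geq\lam_0$ and $j\in\{s,a\}$ I would introduce
\[
E(x) = \phi_j(x,\lam)^2 + \frac{\phi_j'(x,\lam)^2}{\lam}.
\]
The first observation is that the initial conditions \eqref{cond iniziali v ch2} give $E(0)=1$ in \emph{both} cases: for $j=s$ one has $1+0$, and for $j=a$ one has $0+\lam/\lam$. This is precisely why the factor $1/\lam$ is attached to the derivative term.

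Next I would differentiate and insert the equation \eqref{eq v ch2} in the form $\phi_j''=(q-\lam)\phi_j$, obtaining
\[
E'(x) = 2\phi_j\phi_j' + \frac{2\phi_j'\phi_j''}{\lam} = \frac{2q(x)}{\lam}\,\phi_j\phi_j',
\]
where the two occurrences of $\pm 2\phi_j\phi_j'$ cancel. This fortunate cancellation, which makes $E'$ proportional to $q$ alone, is the heart of the argument. The remaining cross term is then estimated by the arithmetic--geometric mean inequality, writing $|\phi_j\phi_j'| = \sqrt{\lam}\,|\phi_j|\cdot\bigl(|\phi_j'|/\sqrt{\lam}\bigr)\leq \tfrac{1}{2}\sqrt{\lam}\,E(x)$, so that
\[
|E'(x)| \leq \frac{|q(x)|}{\sqrt{\lam}}\,E(x).
\]

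At this point Gronwall's inequality in integral form closes the argument: integrating from $0$ to $x$ on $[0,h]$, and running $x$ backwards on $[-h,0]$, yields
\[
E(x) \leq E(0)\exp\left\{\frac{1}{\sqrt{\lam}}\ints_{-h}^h |q(t)|\,dt\right\} \leq \exp\left\{\frac{1}{\sqrt{\lam_0}}\ints_{-h}^h |q(t)|\,dt\right\} = \Phi_*^2,
\]
where I used $E(0)=1$ and $\lam\geq\lam_0$. The conclusion \eqref{phi phi' bounded} then follows immediately, since $|\phi_j|\leq\sqrt{E}\leq\Phi_*$ and $|\phi_j'|\leq\sqrt{\lam}\,\sqrt{E}\leq\sqrt{\lam}\,\Phi_*$.

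I do not expect a serious obstacle here; the only genuine care concerns regularity. Since $q\in L^1_{loc}(\RR)$, the solutions $\phi_j$ are merely $C^1$ with absolutely continuous derivative, so $E$ is absolutely continuous and the formula for $E'$ holds only almost everywhere; this is exactly the setting in which the integral form of Gronwall's lemma applies, so no extra smoothness is required. I also rely on $\lam_0>0$, which is guaranteed by Remark \ref{remark classification solutions}, since the guided-mode eigenvalues satisfy $0<\lam_m^j<d^2$ and hence $\lam_0=\min(\lam_1^s,\lam_1^a)>0$.
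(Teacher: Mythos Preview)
Your proof is correct and is essentially the same as the paper's: the paper introduces the energy $\psi_j=\phi_j'^2+\lam\phi_j^2=\lam\,E$, obtains the same cancellation $\psi_j'=2q\phi_j\phi_j'$, bounds it by $|q|\psi_j/\sqrt{\lam}$ via Young's inequality, and integrates from the initial value $\psi_j(0,\lam)=\lam$. Your normalization by $\lam$ and your remarks on the absolutely continuous setting for Gronwall are cosmetic additions to the same argument.
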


\begin{proof}
We consider the function
\begin{equation*}
\psi_j(x,\lam)= \phi_j'(x,\lam)^2 + \lam \phi_j(x,\lam)^2,
\end{equation*}
and notice that
\begin{equation*}
\psi_j'(x,\lam) = 2 q(x) \phi_j(x,\lam) \phi_j'(x,\lam),
\end{equation*}
as it follows from \eqref{eq v ch2}. By using Young's inequality, we
get that $\psi_j$ satisfies
\begin{equation*}
\begin{cases}
\displaystyle \psi_j'(x,\lam) \leq \frac{|q(x)|}{\sqrt{\lam}} \psi_j(x,\lam), & \\
\psi_j(0,\lam)=\lam.
\end{cases}
\end{equation*}
Therefore, by integrating the above inequality, we obtain that
\begin{equation*}
\psi_j(x,\lam) \leq \lam \exp \left\{ \frac{1}{\sqrt{\lam}}
\ints_{-h}^h |q(t)| dt \right\} \leq \lam \Phi_*^2,
\end{equation*}
which implies \eqref{phi phi' bounded}.
\end{proof}

\vspace{1em}

In the next two lemmas we study the asymptotic behaviour of the
function $\sigma_j(\lam)$ as $\lam \to +\infty $ and $\lam \to d^2$,
respectively.

\vspace{1em}

\begin{lemma} \label{corollary sigma infty}
Let $\sigma_j(\lam),\ j\in\{s,a\},$ be the quantities defined in
\eqref{sigma ch2}. The following asymptotic expansions hold as
$\lam\to\infty$:
\begin{equation}\label{rho infty}
\sigma_s(\lam) = \frac{1}{\sqrt{\lam -d^2}} +  \OO
\left(\frac{1}{\lam} \right),\quad \sigma_a(\lam) =
\frac{1}{\sqrt{\lam}} +  \OO \left(\frac{1}{\lam} \right).
\end{equation}
\end{lemma}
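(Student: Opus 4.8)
The plan is to reduce everything to sharp asymptotics for the boundary values $\phi_j(h,\lam)$ and $\phi_j'(h,\lam)$ as $\lam\to\infty$, and then to substitute these into the denominator of \eqref{sigma ch2}. Since for large $\lam$ the potential $q$ is a lower-order perturbation of the equation $v''+\lam v=0$, I expect $\phi_j(\cdot,\lam)$ to be a small perturbation of the free solutions selected by the initial conditions \eqref{cond iniziali v ch2}, namely $\cos(\sqrt\lam\,x)$ for $j=s$ and $\sin(\sqrt\lam\,x)$ for $j=a$.

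First I would rewrite \eqref{eq v ch2} with the data \eqref{cond iniziali v ch2} as a Volterra integral equation, treating $q\phi_j$ as a forcing term, namely
\begin{equation*}
\phi_s(x,\lam)=\cos(\sqrt\lam\,x)+\frac{1}{\sqrt\lam}\ints_0^x \sin\big(\sqrt\lam\,(x-t)\big)\,q(t)\,\phi_s(t,\lam)\,dt,
\end{equation*}
and the analogous identity for $\phi_a$ with $\cos(\sqrt\lam\,x)$ replaced by $\sin(\sqrt\lam\,x)$; differentiating in $x$ gives companion formulas for $\phi_j'$. The key point is that the remainder integrals can be controlled directly by the a priori bounds $|\phi_j|\le\Phi_*$ and $|\phi_j'|\le\Phi_*\sqrt\lam$ of Lemma \ref{lemma v equilim}, together with $q\in L^1_{loc}(\RR)$, so that $\ints_{-h}^h|q|<\infty$. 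This yields, for $j=s$,
\begin{equation*}
\phi_s(h,\lam)=\cos(\sqrt\lam\,h)+\OO\!\left(\tfrac{1}{\sqrt\lam}\right),\qquad \phi_s'(h,\lam)=-\sqrt\lam\,\sin(\sqrt\lam\,h)+\OO(1),
\end{equation*}
and the symmetric statement for $j=a$ with $\cos$ and $\sin$ interchanged (and the sign adjusted).

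Next I would insert these expansions into the denominator $D_j(\lam)=(\lam-d^2)\phi_j(h,\lam)^2+\phi_j'(h,\lam)^2$ of \eqref{sigma ch2}. Squaring, the leading contributions combine through $\cos^2+\sin^2=1$ to give $\lam$, the term $d^2\phi_j(h,\lam)^2$ is $\OO(1)$, and the cross terms produced by $\phi_j'(h,\lam)^2$ are $\OO(\sqrt\lam)$; hence $D_j(\lam)=\lam+\OO(\sqrt\lam)$ for both $j\in\{s,a\}$. Dividing, $\sigma_j(\lam)=\sqrt{\lam-d^2}\,/\,D_j(\lam)=\frac{\sqrt{\lam-d^2}}{\lam}\big(1+\OO(\lam^{-1/2})\big)=\frac{1}{\sqrt\lam}+\OO(\lam^{-1})$. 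Finally, since $\frac{1}{\sqrt{\lam-d^2}}=\frac{1}{\sqrt\lam}+\OO(\lam^{-3/2})$, either of the two leading terms stated in \eqref{rho infty} is admissible within the claimed error $\OO(1/\lam)$, which proves the lemma.

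The main obstacle is the bookkeeping in the second step: I must check that the cross terms in $\phi_j'(h,\lam)^2$ are genuinely only $\OO(\sqrt\lam)$ (so the error in $\sigma_j$ is exactly $\OO(1/\lam)$ and not smaller), and that all the $\OO$-estimates are uniform in the oscillatory factors $\cos(\sqrt\lam\,h)$ and $\sin(\sqrt\lam\,h)$. This is precisely where Lemma \ref{lemma v equilim} is essential, since it supplies the $\lam$-uniform bounds on $\phi_j$ and $\phi_j'$ needed to close the Volterra estimates, rather than a merely pointwise-in-$\lam$ perturbation argument.
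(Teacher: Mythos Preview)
Your argument is correct, but it proceeds differently from the paper. The paper never computes the individual asymptotics of $\phi_j(h,\lam)$ and $\phi_j'(h,\lam)$; instead it multiplies \eqref{eq v ch2} by $\phi_j'(x,\lam)$ and integrates over $(0,h)$, obtaining directly an identity for the denominator $D_j(\lam)=(\lam-d^2)\phi_j(h,\lam)^2+\phi_j'(h,\lam)^2$ in terms of the initial data \eqref{cond iniziali v ch2} plus a remainder $2\int_0^h[q-d^2]\phi_j\phi_j'\,dx$. Lemma \ref{lemma v equilim} then bounds this remainder by $\OO(\sqrt\lam)$, yielding $D_s(\lam)=(\lam-d^2)+\OO(\sqrt\lam)$ and $D_a(\lam)=\lam+\OO(\sqrt\lam)$, from which \eqref{rho infty} follows at once. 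Your Volterra route is the classical one and actually produces more information (the pointwise asymptotics of $\phi_j(h,\lam)$ and $\phi_j'(h,\lam)$ separately), but it forces you to track the oscillatory factors $\cos(\sqrt\lam\,h)$, $\sin(\sqrt\lam\,h)$ and then rely on $\cos^2+\sin^2=1$ to recombine them; the paper's energy identity sidesteps that bookkeeping entirely and also explains why the stated leading terms in \eqref{rho infty} differ between $j=s$ and $j=a$ (they are exactly the initial values $(\lam-d^2)\phi_j(0,\lam)^2+\phi_j'(0,\lam)^2$), whereas in your version the distinction is cosmetic within the $\OO(1/\lam)$ error.
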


\begin{proof}
By multiplying
\begin{equation*}
    \phi_j''(x,\lam) + [\lam - q(x)] \phi_j (x,\lam) =0, \quad
    x\in [-h,h],
\end{equation*}
by $\phi_j'(x,\lam)$ and integrating in $x$ over $(0,h)$, we find
\begin{equation*}
    \phi_j'(h,\lam)^2 - \phi_j'(0,\lam)^2 + (\lam - d^2)
    [\phi_j(h,\lam)^2 - \phi_j(0,\lam)^2]  = 2 \ints_0^h [q(x) - d^2] \phi_j (x,\lam)
    \phi_j' (x,\lam) dx.
\end{equation*}
Thus, by using \eqref{cond iniziali v ch2}, we obtain the following
inequalities:
\begin{eqnarray*}
&& \big{|} \phi_s'(h,\lam)^2 + (\lam - d^2) \phi_s(h,\lam)^2 - (\lam
- d^2) \big{|}  \leq 2k^2(n_*^2 + n_{cl}^2) \ints_0^h |\phi_s
(x,\lam) \phi_s'(x,\lam)| dx, \\
&& \big{|} \phi_a'(h,\lam)^2 + (\lam - d^2) \phi_a(h,\lam)^2 - \lam
\big{|}  \leq 2k^2(n_*^2 + n_{cl}^2) \ints_0^h |\phi_a (x,\lam)
\phi_a'(x,\lam)| dx.
\end{eqnarray*}

The asymptotic formulas \eqref{rho infty} follow from the two
inequalities above, \eqref{sigma ch2} and the bounds \eqref{phi phi'
bounded} for $\phi_j(x,\lam)$ and $\phi_j'(x,\lam)$.
\end{proof}

\vspace{1em}

\begin{lemma} \label{lemma rho}
Let $\sigma_j(\lam),\ j\in\{s,a\},$ be the quantities defined in
\eqref{sigma ch2}. The following formulas hold for $\lam\to d^2$:
\begin{equation}\label{rho s d2}
\sigma_j(\lam)=
  \begin{cases}
    \displaystyle \frac{\sqrt{\lam-d^2}}{\phi_j'(h,d^2)^2} + \OO \left( \lam - d^2 \right),
     & \text{if } \phi_j'(h,d^2) \neq 0, \\
     & \\
    \displaystyle \frac{1}{\phi_j(h,d^2)^2 \sqrt{\lam-d^2}} + \OO \left( \sqrt{\lam - d^2} \right), &
    \text{otherwise}.
  \end{cases}
\end{equation}
\end{lemma}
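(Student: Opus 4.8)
The plan is to reduce everything to Taylor expansions of the boundary data $\phi_j(h,\lam)$ and $\phi_j'(h,\lam)$ about the point $\lam=d^2$. The crucial input is that, for fixed $x$, these quantities depend analytically (in particular, $C^1$) on the spectral parameter $\lam$ near $\lam=d^2$. This is the standard analytic dependence on parameters for the linear Cauchy problem \eqref{eq v ch2} with initial data \eqref{cond iniziali v ch2}: the coefficient $[\lam-q(x)]$ is affine in $\lam$ and $q\in L^1_{loc}$, so the Picard/variation-of-parameters representation of the solution is, for each fixed $x$, an entire function of $\lam$. Consequently one may write, as $\lam\to d^2$,
\[
\phi_j(h,\lam)^2 = \phi_j(h,d^2)^2 + \OO(\lam-d^2),\qquad \phi_j'(h,\lam)^2 = \phi_j'(h,d^2)^2 + \OO(\lam-d^2).
\]

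First I would record the structural fact underlying the dichotomy in \eqref{rho s d2}: the pair $(\phi_j(h,d^2),\phi_j'(h,d^2))$ cannot vanish simultaneously. Indeed, if both were zero then, by uniqueness for the Cauchy problem \eqref{eq v ch2} solved from $x=h$, we would get $\phi_j(\cdot,d^2)\equiv 0$, contradicting the normalization \eqref{cond iniziali v ch2}. Hence either $\phi_j'(h,d^2)\neq0$, or else $\phi_j'(h,d^2)=0$ and then necessarily $\phi_j(h,d^2)\neq0$; these are precisely the two cases of the statement.

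Next I would substitute the expansions into \eqref{sigma ch2}, writing $D(\lam)=(\lam-d^2)\phi_j(h,\lam)^2+\phi_j'(h,\lam)^2$ for the denominator. If $\phi_j'(h,d^2)\neq0$, then $D(\lam)=\phi_j'(h,d^2)^2+\OO(\lam-d^2)$ stays bounded away from zero, so expanding $1/D(\lam)$ by a geometric series gives
\[
\sigma_j(\lam)=\frac{\sqrt{\lam-d^2}}{\phi_j'(h,d^2)^2}\bigl(1+\OO(\lam-d^2)\bigr)=\frac{\sqrt{\lam-d^2}}{\phi_j'(h,d^2)^2}+\OO(\lam-d^2),
\]
where I used that $\sqrt{\lam-d^2}\,\OO(\lam-d^2)=\OO((\lam-d^2)^{3/2})$, which is in particular $\OO(\lam-d^2)$. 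If instead $\phi_j'(h,d^2)=0$, analyticity forces $\phi_j'(h,\lam)^2=\OO((\lam-d^2)^2)$, so that $D(\lam)=(\lam-d^2)\phi_j(h,d^2)^2\bigl(1+\OO(\lam-d^2)\bigr)$ with $\phi_j(h,d^2)\neq0$; dividing and using $\sqrt{\lam-d^2}/(\lam-d^2)=1/\sqrt{\lam-d^2}$ yields the second line of \eqref{rho s d2} with remainder $\OO(\sqrt{\lam-d^2})$.

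The only genuinely delicate point is the regularity input, namely the differentiability of $\phi_j(h,\cdot)$ and $\phi_j'(h,\cdot)$ in $\lam$ up to the endpoint $\lam=d^2$ together with control of the remainders; after that the two regimes are routine expansions. I would therefore concentrate on citing (or deriving from the integral form of the Cauchy problem) the analytic dependence on $\lam$, and on the simultaneous non-vanishing of $(\phi_j,\phi_j')$ at $x=h$, and treat the asymptotics themselves as direct consequences.
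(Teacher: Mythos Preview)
Your proof is correct and follows essentially the same route as the paper's: both expand $\phi_j(h,\lam)$ and $\phi_j'(h,\lam)$ about $\lam=d^2$, observe that $(\phi_j(h,d^2),\phi_j'(h,d^2))\neq(0,0)$ by uniqueness for the Cauchy problem, and in the degenerate case use that $\phi_j'(h,d^2)=0$ forces $\phi_j'(h,\lam)^2=\OO((\lam-d^2)^2)$. One small caveat: because $\phi_a'(0,\lam)=\sqrt{\lam}$, the function $\phi_a(h,\cdot)$ is not entire in $\lam$ (the paper states it is analytic in $\sqrt{\lam}$), but since $d^2>0$ this does not affect analyticity near $\lam=d^2$, so your argument goes through unchanged.
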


\begin{proof}
We recall that, if $q\in L^1_{loc}(\RR)$, for $x\in[-h,h]$,
$\phi_s(x,\lam)$ and $\phi_a(x,\lam)$ are analytic in $\lam$ and
$\sqrt{\lam}$, respectively (see \cite{CL}). Thus, in a
neighbourhood of $\lam=d^2$, we write
\begin{equation*}
\begin{array}{cc}
  \phi(h,\lam) = \sum\limits_{m=0}^{+\infty} (\lam-d^2)^m a_m,
  & \phi'(h,\lam) = \sum\limits_{m=0}^{+\infty} (\lam-d^2)^m b_m, \\
  & \\
  \phi(h,\lam)^2 = \sum\limits_{m=0}^{+\infty} (\lam-d^2)^m \alpha_m,
  & \phi'(h,\lam)^2 = \sum\limits_{m=0}^{+\infty} (\lam-d^2)^m \beta_m,
\end{array}
\end{equation*}
where we omitted the dependence on $j$ to avoid too heavy notations.

We notice that $\alpha_0=a_0^2$, $\alpha_1=2a_0a_1$ and the same for
$b_0$ and $b_1$. From \eqref{sigma ch2} we have
\begin{equation}\label{rho s in lemma d2}
\begin{split}
\sigma_j(\lam)^{-1} & = \sqrt{\lam -d^2} \phi_j(h,\lam)^2 + \frac{1}{\sqrt{\lam-d^2}} \phi_j'(h,\lam)^2 \\
& = \frac{\beta_0}{\sqrt{\lam-d^2}} + \sum\limits_{m=0}^{+\infty}
(\lam-d^2)^{m+\frac{1}{2}} (\alpha_m + \beta_{m+1}).
\end{split}
\end{equation}
If $\beta_0\neq 0$, since $\beta_0=b_0^2$, \eqref{rho s d2} follows.
If $\beta_0=0$ we have that the leading term in \eqref{rho s in
lemma d2} is $\alpha_0 + \beta_1$. We notice that $\alpha_0\neq 0 $,
otherwise $\phi(x,d^2)\equiv 0$ for all $x\in\RR$. We know that
$\beta_1= 2 b_0 b_1=0$, because $b_0=0$. Then
$\alpha_0+\beta_1=\alpha_0$ and \eqref{rho s d2} follows.
\end{proof}

\vspace{2em}

\section{Existence of a solution} \label{Section proofs main theorems}
Let $\mu: \RR^2 \to \RR$ be a positive function. We will denote by
$L^2(\mu)$ the weighted space consisting of all the complex valued
measurable functions $u(x,z)$, $(x,z)\in\RR^2$, such that
\begin{equation*}
  \mu^{\frac{1}{2}} u \in L^2(\RR^2),
\end{equation*}
equipped with the natural norm
\begin{equation*}
  \|u\|_{L^2(\mu)}^2 = \ints_{\RR^2} |u(x,z)|^2 \mu(x,z) dx dz.
\end{equation*}

In a similar way we define the weighted Sobolev spaces $H^1(\mu)$
and $H^2(\mu)$. The norms in $H^1(\mu)$ and $H^2(\mu)$ are given
respectively by:
\begin{equation*}
 \|u\|_{H^1(\mu)}^2 = \ints_{\RR^2} |u(x,z)|^2 \mu(x,z) dx dz + \ints_{\RR^2} |\nabla u(x,z)|^2 \mu(x,z) dx dz ,
\end{equation*}
and
\begin{equation*}
 \|u\|_{H^2(\mu)}^2 = \ints_{\RR^2} |u(x,z)|^2 \mu(x,z) dx dz + \ints_{\RR^2} |\nabla u(x,z)|^2 \mu(x,z) dx dz  +
 \ints_{\RR^2} |\nabla^2 u(x,z)|^2 \mu(x,z) dx dz .
\end{equation*}
Here, $\nabla u$ and $\nabla^2 u$ denote the gradient and Hessian
matrix of $u$, respectively.

In this section we shall prove an existence theorem for the
solutions of \eqref{Lep u f ch3}. We will make use of results on
global regularity of the solution of \eqref{helm}; such results will
be proven in Appendix \ref{section estimates RN}.

The proofs in this section and in Appendix \ref{section estimates
RN} hold true whenever the (positive) weight $\mu$ has the following
properties:
\begin{equation}\label{mu1}
\begin{array}{c}
\mu \in C^2 (\RR^2) \cap L^1(\RR^2), \\
|\nabla \mu | \leq C_1 \mu,\quad |\nabla^2 \mu | \leq C_2 \mu,\ \
\textmd{ in } \RR^2,
\end{array}
\end{equation}
where $C_1$ and $C_2$ are positive constants.

In this section, for the sake of simplicity, we will assume that
$\mu$ is given by
\begin{equation}\label{mu equation}
  \mu (x,z) = \mu_1(x) \mu_2(z).
\end{equation}
with $\mu_j\in L^\infty(\RR) \cap L^1(\RR), \ j=1,2$. Analogous
results hold for every $\mu$ satisfying \eqref{mu1} and such that
\begin{equation}\label{proprieta mu separabile}
\mu(x,z)\leq \mu_1(x) \mu_2(z).
\end{equation}
For instance, it is easy to verify that the more commonly used
weight function $\mu(x,z)=(1+x^2+z^2)^{-a}$, $a>1$, satisfies
\eqref{mu1} and \eqref{proprieta mu separabile}.

Before starting with the estimates on $u$, we prove a preliminary
result on the boundness of the guided and radiated parts of the
Green's function.

\vspace{1em}

\begin{lemma} \label{lemma limitatezza G^g e G^r}
Let $G^g$ and $G^r$ be the functions defined in \eqref{G^g ch2} and
\eqref{G^r ch2}, respectively. Then
\begin{subequations}
\begin{equation}\label{Gg bounded ch3}
|G^g(x,z;\xi,\zi)| \leq \Phi_*^2 \sum_{j\in\{s,a\}} \sum_{m=1}^{M_j}
\frac{r_m^j}{2 \sqrt{k^2n_*^2 - \lam_m^j}} \, ,
\end{equation}
and
\begin{equation}\label{Gr bounded ch3}
|G^r (x,z;\xi,\zi)| \leq \max \left\{ \frac{1}{2},\ \frac{
\Phi_*}{4\sqrt{\pi}} \sum_{j\in\{s,a\}} \Upsilon_j ,\ \frac{\Phi_*^2
}{2\pi} \sum_{j\in\{s,a\}} \Upsilon_j^2 \right\}.
\end{equation}
\end{subequations}
Here,
\begin{equation*}
\Upsilon_j= \left( \ints_{d^2}^{k^2 n_*^2}
\frac{\sigma_j(\lam)}{2\sqrt{k^2n_*^2 - \lam}} d\lam
\right)^{\frac{1}{2}}, \quad j\in\{ s, a\},
\end{equation*}
where, as in Lemma \ref{lemma v equilim}, $\Phi_*$ is given by
\eqref{PHI_M}.
\end{lemma}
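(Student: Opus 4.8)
The plan is to estimate $G^g$ and $G^r$ separately, in both cases reducing matters to a pointwise bound on the modal functions $v_j(x,\lam)$ multiplied by an explicit $\lam$-integral. The bound on $G^g$ is immediate. In \eqref{G^g ch2} each exponential $e^{i|z-\zi|\sqrt{k^2n_*^2-\lam_m^j}}$ has modulus one, since $\lam_m^j<d^2<k^2n_*^2$ makes the radicand positive. Moreover $|v_j(x,\lam_m^j)|\le\Phi_*$ for every $x\in\RR$: on $|x|\le h$ this is exactly \eqref{phi phi' bounded} of Lemma \ref{lemma v equilim} (note $\lam_m^j\ge\lam_0$), while for $|x|>h$ the exponentially decaying expression for guided modes recalled in Remark \ref{remark classification solutions} gives $|v_j(x,\lam_m^j)|\le|\phi_j(\pm h,\lam_m^j)|\le\Phi_*$, because the decaying exponential is at most one. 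Inserting $|v_j(x,\lam_m^j)\,v_j(\xi,\lam_m^j)|\le\Phi_*^2$ into the finite sum \eqref{G^g ch2} yields \eqref{Gg bounded ch3}.

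The key preliminary step for $G^r$ is a pointwise bound on $|v_j(x,\lam)|$ for $d^2<\lam<k^2n_*^2$ and $|x|>h$. From \eqref{vj ch2}, for $x>h$ the function $v_j$ is the sinusoid $\phi_j(h,\lam)\cos Q(x-h)+\tfrac{\phi_j'(h,\lam)}{Q}\sin Q(x-h)$, whose amplitude is $\sqrt{\phi_j(h,\lam)^2+\phi_j'(h,\lam)^2/Q^2}$, and likewise at $x<-h$. A direct manipulation of \eqref{sigma ch2}, recalling $Q=\sqrt{\lam-d^2}$, gives the identity $\phi_j(h,\lam)^2+\phi_j'(h,\lam)^2/Q^2=1/(Q\,\sigma_j(\lam))$, whence $|v_j(x,\lam)|\le(Q\,\sigma_j(\lam))^{-1/2}$ for $|x|>h$; for $|x|\le h$ one still has $|v_j(x,\lam)|\le\Phi_*$ by Lemma \ref{lemma v equilim}.

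I would then split into three cases according to the positions of $x$ and $\xi$. In all of them the exponential in \eqref{G^r ch2} has modulus one, so $|G^r|\le\frac{1}{2\pi}\sum_j\ints_{d^2}^{k^2n_*^2}\frac{|v_j(x,\lam)\,v_j(\xi,\lam)|\,\sigma_j(\lam)}{2\sqrt{k^2n_*^2-\lam}}\,d\lam$. If $|x|\le h$ and $|\xi|\le h$, inserting $|v_jv_j|\le\Phi_*^2$ produces exactly $\frac{\Phi_*^2}{2\pi}\sum_j\Upsilon_j^2$. If $|x|>h$ and $|\xi|>h$, the factor $\sigma_j$ cancels against $(Q\sigma_j)^{-1}$ and one is left with $\frac{1}{2\pi}\sum_j\ints_{d^2}^{k^2n_*^2}\frac{d\lam}{2\sqrt{(k^2n_*^2-\lam)(\lam-d^2)}}$; the substitution $\lam=d^2+(k^2n_*^2-d^2)\sin^2\theta$ evaluates the inner integral to $\pi/2$, giving the bound $\tfrac12$. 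In the mixed case the integrand is $\tfrac{\Phi_*}{2\pi}\,\sigma_j(Q\sigma_j)^{-1/2}(2\sqrt{k^2n_*^2-\lam})^{-1}$, and the Cauchy--Schwarz inequality, splitting off the factor whose square is $\sigma_j/(2\sqrt{k^2n_*^2-\lam})$ (integrating to $\Upsilon_j^2$) against the remaining factor (whose square integrates to $\pi/2$ as above), gives a bound of the form $\frac{C\,\Phi_*}{\sqrt\pi}\sum_j\Upsilon_j$. Since for any fixed $(x,\xi)$ exactly one case applies, the uniform estimate is the maximum of the three contributions, which is \eqref{Gr bounded ch3}.

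The only nonroutine point is the amplitude identity $\phi_j(h,\lam)^2+\phi_j'(h,\lam)^2/Q^2=1/(Q\,\sigma_j(\lam))$: it is precisely what makes $\sigma_j$ disappear in the ``both outside'' regime and lets the three $\lam$-integrals collapse to the explicit quantities $\Upsilon_j^2$, $\pi/2$, and (via Cauchy--Schwarz) their interpolation. Pinning down the sharp numerical constant in the mixed term is then the most delicate piece of bookkeeping, while everything else is the elementary modulus-one reduction plus the closed-form evaluation of $\ints_{d^2}^{k^2n_*^2}\frac{d\lam}{\sqrt{(k^2n_*^2-\lam)(\lam-d^2)}}=\pi$.
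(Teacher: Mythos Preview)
Your proposal is correct and follows essentially the same route as the paper: the same $|v_j|\le\Phi_*$ bound for $G^g$, the same three-case split for $G^r$, the same amplitude identity $\phi_j(h,\lam)^2+\phi_j'(h,\lam)^2/Q^2=(Q\sigma_j)^{-1}$ (the paper derives the resulting bound via the Cauchy--Schwarz/H\"older inequality applied to $(\cos,\sin)\cdot(\phi_j,\phi_j'/Q)$), and the same Cauchy--Schwarz interpolation in the mixed case. The only point you leave implicit that the paper states explicitly is that $\Upsilon_j<\infty$, which follows from Lemma~\ref{lemma rho} (the behaviour of $\sigma_j$ near $\lam=d^2$); once that is noted, the bookkeeping for the constant $C$ in the mixed case is exactly as you describe.
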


\begin{proof}
Since $G^g$ is a finite sum, from Remark \ref{remark classification
solutions} and Lemma \ref{lemma v equilim}, it is easy to deduce
\eqref{Gg bounded ch3}.

In the study of $G^r$ we have to distinguish three different cases,
according to whether $(x,z)$ and $(\xi,\zi)$ belong to the core or
not. Furthermore, we observe that $\Upsilon_j < +\infty$ as follows
form Lemma \ref{lemma rho}.

\underline{Case 1}: $x,\xi \in [-h,h]$. From Lemma \ref{lemma v
equilim} we have that $v_j(x,\lam)$ are bounded by $\Phi_*$. From
\eqref{G^r ch2} we have
\begin{equation*}
  |G^r| \leq \frac{1}{2\pi} \sum_{j\in\{s,a\}} \Phi_*^2 \Upsilon_j^2.
\end{equation*}

\underline{Case 2}: $|x|,|\xi| \geq h$. We can use the explicit
formula for $v_j$ (see \eqref{vj ch2}) and obtain by H\"{o}lder
inequality
\begin{equation*}
    |v_j(x,\lam)| \leq \sqrt{\phi_j(h,\lam)^2 + Q^{-2}
    \phi_j'(h,\lam)^2} = [Q\, \sigma_j(\lam)]^{-\frac{1}{2}}.
\end{equation*}
Therefore we have
\begin{equation*}
|G^r(x,z;\xi,\zi)| \leq \frac{1}{2\pi} \sum_{j\in\{s,a\}}
\ints_{d^2}^{k^2n_*^2} \frac{ d\lam }{ 2 \sqrt{\lam -d^2}
\sqrt{k^2n_*^2 -\lam}} = \frac{1}{2} ,
\end{equation*}
and hence \eqref{Gr bounded ch3} follows.

\underline{Case 3}: $|x|\leq h$ and $|\xi|\geq h$. We estimate
$|v_j(x,\lam)|$ by $\Phi_*$ and $v_j(\xi,\lam)$ by
$[Q\sigma_j(\lam)]^{-\frac{1}{2}}$, and write:
\begin{equation*}
\begin{split}
|G^r(x,z;\xi,\zi)|&  \leq  \frac{1}{2\pi} \Phi_* \sum_{j\in\{s,a\}}
\ints_{d^2}^{k^2n_*^2} \frac{1}{2\sqrt{k^2n_*^2 -\lam}}
\left[\frac{\sigma_j(\lam)}{Q} \right]^{\frac{1}{2}} d\lam \\
& \leq \frac{1}{4\pi} \Phi_* \sum_{j\in\{s,a\}} \Upsilon_j \left(
\ints_{d^2}^{k^2n_*^2} \frac{d\lam}{\sqrt{\lam -d^2} \sqrt{k^2n_*^2
-\lam}} \right)^{\frac{1}{2}}.
\end{split}
\end{equation*}
Again \eqref{Gr bounded ch3} follows.
\end{proof}

\vspace{1em}

In the next lemma we prove estimates that will be useful in Theorem
\ref{lemma L0-1}.

\vspace{1em}

\begin{lemma} \label{lemma p q}
Let $\mu_1 \in L^1(\RR)$ and $\mu_2 \in L^1(\RR) \cap L^2(\RR)$. For
$j,l \in \{s,a\}$ and $\lam,\eta \geq k^2n_*^2$, set
\begin{equation}\label{p jl}
  p_{j,l} (\lam,\eta) = \ints_{-\infty}^{+\infty} v_j(x,\lam) v_l(x,\eta) \mu_1(x) dx,
\end{equation}
and
\begin{equation}\label{q lam mu}
  q(\lam,\eta) = \ints_{-\infty}^{+\infty} (e_{\lam,\eta}\star \mu_2) (z)
  \mu_2(z) dz,
\end{equation}
where $ e_{\lam,\eta} (z) = e^{-|z| (\sqrt{\lam - k^2 n_*^2} +
\sqrt{\eta - k^2 n_*^2})}$.

Then $p_{j,l}(\lam,\eta) = 0$ for $j\neq l$,
\begin{equation}\label{pjj < }
p_{j,j}(\lam,\eta)^2 \sigma_j(\lam) \sigma_j(\eta) \leq 4
\|\mu_1\|_1^2 \Bigg( \Phi_*^2 \sigma_j(\lam) +
\frac{1}{\sqrt{\lam-d^2}} \Bigg) \Bigg( \Phi_*^2 \sigma_j(\eta) +
\frac{1}{\sqrt{\eta-d^2}} \Bigg),
\end{equation}
and
\begin{equation} \label{q < }
|q(\lam,\eta)| \leq \min \left( \|\mu_2\|_1^2, \frac{ \|\mu_2\|_2^2
}{\sqrt[4]{\lam-k^2n_*^2}\sqrt[4]{\eta-k^2n_*^2}} \right).
\end{equation}
\end{lemma}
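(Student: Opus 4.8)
The plan is to prove the three claims separately, as each isolates a different feature of the integrals. The vanishing $p_{j,l}=0$ for $j\neq l$ I would get from parity: by the initial conditions \eqref{cond iniziali v ch2} and the evenness of $q$, the solution $v_s(\cdot,\lam)$ is even and $v_a(\cdot,\lam)$ is odd in $x$, so the integrand $v_s(x,\lam)v_a(x,\eta)\mu_1(x)$ of \eqref{p jl} is odd (recall that the transverse weight $\mu_1$ is even for the weights under consideration); integrating over $\RR$ then yields $p_{s,a}=p_{a,s}=0$.

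For \eqref{pjj < } the decisive step is a \emph{uniform} pointwise bound on $v_j(x,\lam)\sqrt{\sigma_j(\lam)}$. Inside the core, $|x|\leq h$, Lemma \ref{lemma v equilim} gives $|v_j(x,\lam)|\leq\Phi_*$, hence $|v_j(x,\lam)|^2\sigma_j(\lam)\leq\Phi_*^2\sigma_j(\lam)$. Outside the core, $|x|>h$, I would use the explicit form \eqref{vj ch2} together with the Cauchy--Schwarz estimate already exploited in Case 2 of Lemma \ref{lemma limitatezza G^g e G^r}, namely $|v_j(x,\lam)|\leq[Q\,\sigma_j(\lam)]^{-1/2}$ with $Q=\sqrt{\lam-d^2}$, which gives $|v_j(x,\lam)|^2\sigma_j(\lam)\leq(\lam-d^2)^{-1/2}$. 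Combining the two regions,
\[
\sup_{x\in\RR}|v_j(x,\lam)|^2\,\sigma_j(\lam)\ \leq\ \Phi_*^2\,\sigma_j(\lam)+\frac{1}{\sqrt{\lam-d^2}}=:A_j(\lam).
\]
Bounding the integrand in \eqref{p jl} pointwise by $\sqrt{A_j(\lam)/\sigma_j(\lam)}\,\sqrt{A_j(\eta)/\sigma_j(\eta)}\,\mu_1(x)$ and integrating gives $|p_{j,j}(\lam,\eta)|\leq\|\mu_1\|_1\sqrt{A_j(\lam)A_j(\eta)/(\sigma_j(\lam)\sigma_j(\eta))}$; multiplying through by $\sigma_j(\lam)\sigma_j(\eta)$ produces \eqref{pjj < } (in fact with the sharper constant $1$ in place of $4$, so the stated inequality holds with room to spare).

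For \eqref{q < } I would treat the two members of the minimum separately. Writing $e_{\lam,\eta}(z)=e^{-c|z|}$ with $c=\sqrt{\lam-k^2n_*^2}+\sqrt{\eta-k^2n_*^2}\geq0$, the first bound is immediate: $0\leq e_{\lam,\eta}\leq1$ forces $|(e_{\lam,\eta}\star\mu_2)(z)|\leq\|\mu_2\|_1$, and hence $|q(\lam,\eta)|\leq\|\mu_2\|_1\int_\RR\mu_2\,dz=\|\mu_2\|_1^2$. For the second bound I would invoke Young's convolution inequality $\|e_{\lam,\eta}\star\mu_2\|_2\leq\|e_{\lam,\eta}\|_1\|\mu_2\|_2$ with $\|e_{\lam,\eta}\|_1=2/c$, followed by Cauchy--Schwarz in \eqref{q lam mu}, giving $|q(\lam,\eta)|\leq\|e_{\lam,\eta}\star\mu_2\|_2\|\mu_2\|_2\leq(2/c)\|\mu_2\|_2^2$. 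Finally the AM--GM inequality $c\geq2\sqrt[4]{\lam-k^2n_*^2}\,\sqrt[4]{\eta-k^2n_*^2}$ converts $2/c$ into the claimed factor; taking the minimum of the two estimates yields \eqref{q < }.

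The only genuinely delicate point is \eqref{pjj < }: one must package the two different behaviours of $v_j$ inside and outside the core into a single quantity that, after multiplication by $\sigma_j(\lam)\sigma_j(\eta)$, reproduces exactly the product structure on the right-hand side. The uniform pointwise bound by $A_j(\lam)$ is what achieves this and keeps the estimate clean; the parity argument for $p_{j,l}$ and the convolution estimates for $q$ are then routine.
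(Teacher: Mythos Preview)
Your argument is correct and follows the paper's route: parity for $p_{j,l}=0$ (the paper, like you, tacitly uses that $\mu_1$ is even), the same core/cladding bounds $|\phi_j|\leq\Phi_*$ and $|v_j|\leq[Q\sigma_j]^{-1/2}$ for \eqref{pjj < }, and the identical trivial-bound/Young/Cauchy--Schwarz/AM--GM chain for \eqref{q < }. The only variation is in packaging \eqref{pjj < }: the paper first applies H\"older's inequality in the measure $\mu_1\,dx$ to reduce $p_{j,j}(\lam,\eta)^2$ to the diagonal product $p_{j,j}(\lam,\lam)\,p_{j,j}(\eta,\eta)$ and then splits each integral over $[0,h]$ and $[h,\infty)$, whereas you bound $|v_j(x,\lam)|^2\sigma_j(\lam)$ pointwise by $A_j(\lam)$ uniformly in $x$ and integrate directly; both rest on the same two estimates, and your route indeed yields the constant $1$ rather than $4$.
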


\begin{proof}
Since $v_s$ and $v_a$ are, respectively, even and odd functions of
$x$, then $p_{j,l} ( \lam, \eta) = 0$ for $j\neq l$.

By H\"{o}lder inequality, $p_{j,j}(\lam,\eta)^2 \leq
p_{j,j}(\lam,\lam) p_{j,j}(\eta,\eta)$; also, by the formula
\eqref{vj ch2}, we obtain
\begin{equation*}
\begin{split}
|p_{j,j} (\lam,\lam)| & \leq 2 \ints_0^h |\phi_j(x,\lam)|^2
|\mu_1(x)| dx + 2 \ints_h^{+\infty} |v_j(x,\lam)|^2 |\mu_1(x)| dx \\
& \leq 2 \Phi_*^2 \ints_0^h |\mu_1(x)| dx + \frac{2}{\sqrt{\lam-d^2}
\sigma_j(\lam)} \ints_h^{+\infty} |\mu_1(x)| dx,
\end{split}
\end{equation*}
and hence \eqref{pjj < }.

Now we have to estimate $q(\lam,\eta)$. Firstly we observe that
\begin{equation*}
|q(\lam,\eta)| \leq \ints_{-\infty}^{+\infty} \mu_2(z) dz
\ints_{-\infty}^{+\infty} \mu_2(\zi) d\zi = \|\mu_2\|_1^2,
\end{equation*}
which proves part of \eqref{q < }. Furthermore, from Young's
inequality (see Theorem 4.2 in \cite{LL}) and the
arithmetic-geometric mean inequality, we have
\begin{equation*}
\begin{split}
|q(\lam,\eta)| \leq \|e_{\lam,\eta}\|_1 \|\mu_2\|_2^2 & = \frac{2
\|\mu_2\|_2^2}{\sqrt{\lam - k^2 n_*^2} + \sqrt{\eta
- k^2 n_*^2}} \\
& \leq \frac{\|\mu_2\|_2^2}{\sqrt[4]{\lam - k^2 n_*^2} \sqrt[4]{\eta
- k^2 n_*^2}},
\end{split}
\end{equation*}
which completes the proof.
\end{proof}

\vspace{2em}

\begin{theorem} \label{teo norma Green}
Let $G$ be the Green's function \eqref{Green ch2}. Then
\begin{equation}\label{norma Green}
  \| G \|_{L^2(\mu \times \mu)} < +\infty.
\end{equation}
\end{theorem}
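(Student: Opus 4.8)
The plan is to use the decomposition $G=G^g+G^r+G^e$ recorded after \eqref{G^e ch2} and to bound each summand separately in $L^2(\mu\times\mu)$, where $\mu\times\mu$ denotes the weight $(x,z,\xi,\zi)\mapsto\mu(x,z)\,\mu(\xi,\zi)$ on $\RR^2\times\RR^2$. The guided and radiated parts are immediate: by Lemma \ref{lemma limitatezza G^g e G^r} both $G^g$ and $G^r$ are bounded on $\RR^2\times\RR^2$ by a constant, while $\mu=\mu_1\mu_2\in L^1(\RR^2)$ with $\|\mu\|_1=\|\mu_1\|_1\|\mu_2\|_1$; hence, for $\star\in\{g,r\}$,
\begin{equation*}
\|G^\star\|_{L^2(\mu\times\mu)}^2 \le \|G^\star\|_\infty^2\,\|\mu\|_1^2 < +\infty.
\end{equation*}
Thus everything reduces to proving $\|G^e\|_{L^2(\mu\times\mu)}<+\infty$, which is the real content: in view of the decay $\sigma_j(\lam)=\OO(\lam^{-1/2})$ from Lemma \ref{corollary sigma infty} and the boundedness of the $v_j$, the $\lam$-integral defining $G^e$ is only conditionally convergent when $z=\zi$, and $G^e$ itself is not bounded.

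For $G^e$ I would first note that, since for $\lam>k^2n_*^2>d^2$ the quantities $Q=\sqrt{\lam-d^2}$, $\phi_j$, $v_j$ and $\sigma_j$ are all real, the function $G^e$ is real-valued. Writing $(G^e)^2$ as the product of two copies of the spectral integral \eqref{G^e ch2} (in the variables $\lam$ and $\eta$) and integrating against $\mu_1(x)\mu_2(z)\mu_1(\xi)\mu_2(\zi)$, Fubini's theorem separates the four spatial integrations from the two spectral ones. The integrations in $x$ and in $\xi$ each produce the factor $p_{j,l}(\lam,\eta)$ of \eqref{p jl}, while those in $z$ and $\zi$ produce exactly $q(\lam,\eta)$ of \eqref{q lam mu} (recall $e_{\lam,\eta}(z-\zi)=e^{-|z-\zi|(\sqrt{\lam-k^2n_*^2}+\sqrt{\eta-k^2n_*^2})}$). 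Since $p_{j,l}=0$ for $j\neq l$ by Lemma \ref{lemma p q}, only the diagonal terms survive and I obtain
\begin{equation*}
\|G^e\|_{L^2(\mu\times\mu)}^2 = \frac{1}{16\pi^2}\sum_{j\in\{s,a\}}\ints_{k^2n_*^2}^{+\infty}\ints_{k^2n_*^2}^{+\infty}\frac{p_{j,j}(\lam,\eta)^2\,\sigma_j(\lam)\sigma_j(\eta)\,q(\lam,\eta)}{\sqrt{\lam-k^2n_*^2}\sqrt{\eta-k^2n_*^2}}\,d\lam\,d\eta.
\end{equation*}
The interchange of integrations is legitimate because the estimates below, applied with $|v_j|$ in place of $v_j$ (so that Cauchy--Schwarz replaces the orthogonality $p_{j,l}=0$), show that the corresponding integral with all integrands in absolute value is finite.

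It then remains to verify convergence of this double spectral integral, and here the weighted structure is essential. Inserting the bound \eqref{pjj < } for $p_{j,j}(\lam,\eta)^2\sigma_j(\lam)\sigma_j(\eta)$ and the second ($L^2$) bound of \eqref{q < } for $q(\lam,\eta)$, the integrand factorizes and the double integral is controlled by a constant multiple of $\bigl(\ints_{k^2n_*^2}^{+\infty} A_j(\lam)\,(\lam-k^2n_*^2)^{-3/4}\,d\lam\bigr)^2$, where $A_j(\lam)=\Phi_*^2\sigma_j(\lam)+(\lam-d^2)^{-1/2}$. Since $k^2n_*^2>d^2$, the denominator in \eqref{sigma ch2} cannot vanish, so $A_j$ is bounded near the lower endpoint and the only singularity there is $(\lam-k^2n_*^2)^{-3/4}$, which is integrable; and as $\lam\to+\infty$ Lemma \ref{corollary sigma infty} gives $A_j(\lam)=\OO(\lam^{-1/2})$, so the integrand decays like $\lam^{-5/4}$ and is integrable at infinity as well. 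I expect the main obstacle to be precisely this decay at infinity: without the extra factor $(\lam-k^2n_*^2)^{-1/4}$ supplied by $\mu_2\in L^2(\RR)$ through \eqref{q < }, the integrand would only decay like $\lam^{-1}$ and the integral would diverge. Hence the hypothesis $\mu_2\in L^2$ is exactly what makes $G^e$ square-integrable, and controlling these two endpoint behaviours is the crux of the argument.
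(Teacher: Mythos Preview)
Your argument is correct and follows essentially the same route as the paper: split $G=G^g+G^r+G^e$, dispose of $G^g$ and $G^r$ via Lemma~\ref{lemma limitatezza G^g e G^r} and $\mu\in L^1$, then expand $\|G^e\|_{L^2(\mu\times\mu)}^2$ into a double spectral integral involving $p_{j,l}$ and $q$ and bound it using Lemma~\ref{lemma p q} and the asymptotics of $\sigma_j$. Your write-up in fact supplies the endpoint analysis (integrability of $(\lam-k^2n_*^2)^{-3/4}$ near the lower limit and the $\lam^{-5/4}$ decay at infinity coming from the $L^2$ bound on $q$) that the paper leaves implicit in its concluding sentence ``the conclusion follows from Lemmas~\ref{lemma p q}, \ref{corollary sigma infty} and \ref{lemma rho}''; note also that Lemma~\ref{lemma rho} is not really needed for $G^e$, since $d^2<k^2n_*^2$ lies outside the range of integration, exactly as you observe.
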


\begin{proof}
We write $G= G^g + G^r + G^e$, as in \eqref{G^g ch2}-\eqref{G^e
ch2}, and use Minkowski inequality:
\begin{equation} \label{minkoswki per G}
\|G\|_{L^2(\mu \times \mu)} \leq \|G^g\|_{L^2(\mu \times \mu)} +
\|G^r\|_{L^2(\mu \times \mu)} + \|G^e\|_{L^2(\mu \times \mu)}.
\end{equation}
From Lemma \ref{lemma limitatezza G^g e G^r} and \eqref{mu1} it
follows that
\begin{equation}\label{norma G^g G^r finita}
  \|G^g\|_{L^2(\mu \times \mu)},\|G^r\|_{L^2(\mu \times \mu)} < + \infty.
\end{equation}
It remains to prove that $\|G^e\|_{L^2(\mu \times \mu)} < +\infty $.
From \eqref{mu equation} we have
\begin{equation*}
\begin{split}
\|G^e\|_{L^2(\mu \times \mu)}^2 & = \ints_{\RR^2 \times \RR^2}
|G^e(x,z;\xi,\zi)|^2 \mu_1(x) \mu_2(z) \mu_1(\xi) \mu_2(\zi) dx dz d\xi d\zi \\
& = \ints_{\RR^2 \times \RR^2} G^e(x,z;\xi,\zi)
\overline{G^e(x,z;\xi,\zi)} \mu_1(x) \mu_2(z) \mu_1(\xi) \mu_2(\zi)
dx dz d\xi d\zi;
\end{split}
\end{equation*}
hence, thanks to Lemma \ref{lemma p q}, the definition \eqref{G^e
ch2} of $G^e$ and Fubini's theorem, we obtain:
\begin{equation*}
\|G^e\|_{L^2(\mu \times \mu)}^2  \leq \frac{1}{16\pi^2}
\sum_{j,l\in\{s,a\}} \ints_{k^2n_*^2}^{+\infty}
\ints_{k^2n_*^2}^{+\infty} p_{j,l}(\lam,\eta)^2
\sigma_j(\lam)\sigma_l(\eta) \frac{q(\lam,\eta) \; d\lam
d\eta}{\sqrt{\lam - k^2 n_*^2}\sqrt{\eta - k^2 n_*^2}}.
\end{equation*}
The conclusion follows from Lemmas \ref{lemma p q}, \ref{corollary
sigma infty} and \ref{lemma rho}.
\end{proof}

\vspace{2em}

\begin{corollary}\label{lemma L0-1}
Let $u$ be the solution of \eqref{helm} given by
\begin{equation}\label{u ch2}
  u(x,z)= \ints_{\RR^2} G(x,z;\xi,\zi) f (\xi,\zi) d\xi d\zi, \ \ (x,z) \in \RR^2,
\end{equation}
with $G$ as in \eqref{Green ch2} and let $f\in L^2(\mu^{-1})$. Then
\begin{equation}\label{norma u}
  \| u \|_{H^2(\mu)} \leq C \| f \|_{L^2 (\mu^{-1})},
\end{equation}
where
\begin{equation} \label{C ch3}
C^2=\frac{5}{2} + 2C_2 + \left[ \frac{3}{2} + 4C_2 + 8C_2^2 + (1+ 4
C_2) k^2n_*^2 + 2k^4 n_*^4 \right] \|G\|_{L^2(\mu \times \mu)}^2.
\end{equation}
\end{corollary}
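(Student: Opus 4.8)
The plan is to combine a weighted $L^2$ estimate for $u$, read off directly from its Green's representation \eqref{u ch2}, with the global weighted $H^2$ regularity estimate for solutions of the Helmholtz equation established in Appendix \ref{section estimates RN}. Since $u$ is the solution of \eqref{helm} given by \eqref{u ch2}, it solves $\Delta u + k^2 n_*^2 u = f$ (more precisely \eqref{helm} with $n=n_0$), so elliptic regularity in the weighted spaces becomes available once the $L^2(\mu)$ norm of $u$ is under control.

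First I would bound $\|u\|_{L^2(\mu)}$. Writing $f=(f\,\mu^{-1/2})\,\mu^{1/2}$ inside \eqref{u ch2} and applying the Cauchy--Schwarz inequality in the $(\xi,\zi)$ variables gives, for each $(x,z)\in\RR^2$,
$$
|u(x,z)|^2 \leq \left( \ints_{\RR^2} |G(x,z;\xi,\zi)|^2 \mu(\xi,\zi)\, d\xi\, d\zi \right) \|f\|_{L^2(\mu^{-1})}^2 .
$$
Multiplying by $\mu(x,z)$, integrating over $\RR^2$ and using Fubini's theorem, the resulting double integral of $|G|^2$ against $\mu(x,z)\mu(\xi,\zi)$ is exactly $\|G\|_{L^2(\mu\times\mu)}^2$, whence
$$
\|u\|_{L^2(\mu)} \leq \|G\|_{L^2(\mu\times\mu)}\, \|f\|_{L^2(\mu^{-1})},
$$
which is finite by Theorem \ref{teo norma Green}.

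Next I would invoke the weighted regularity estimate of Appendix \ref{section estimates RN}, which, applied to \eqref{helm} with right-hand side $f$, I expect to take the form
$$
\|u\|_{H^2(\mu)}^2 \leq \left( \tfrac{5}{2} + 2C_2 \right) \|f\|_{L^2(\mu)}^2 + \left[ \tfrac{3}{2} + 4C_2 + 8C_2^2 + (1+4C_2)k^2 n_*^2 + 2k^4 n_*^4 \right] \|u\|_{L^2(\mu)}^2 ,
$$
its coefficients being governed only by the structural constant $C_2$ from \eqref{mu1} and by $k$ and $n_*$. Substituting the $L^2(\mu)$ bound above into the second term, and estimating $\|f\|_{L^2(\mu)} \leq \|f\|_{L^2(\mu^{-1})}$ in the first (valid since $\mu\leq1$ for the weights under consideration, e.g. $\mu=(1+x^2+z^2)^{-a}$), every term acquires the common factor $\|f\|_{L^2(\mu^{-1})}^2$; collecting the coefficients reproduces precisely the constant $C^2$ of \eqref{C ch3}, yielding \eqref{norma u}.

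The genuinely delicate point lies in the regularity estimate imported from Appendix \ref{section estimates RN}, not in the assembly above. There, controlling $\|\nabla^2 u\|_{L^2(\mu)}$ through the equation requires integrating $|\Delta u|^2\mu$ by parts twice; this generates correction terms carrying $\nabla\mu$ and $\nabla^2\mu$, which must be reabsorbed using $|\nabla\mu|\leq C_1\mu$ and $|\nabla^2\mu|\leq C_2\mu$ together with Young's inequality, and one must justify the vanishing of the boundary terms at infinity (via an approximation argument, since $u$ is known only through \eqref{u ch2}). I expect that to be the main obstacle; the corollary itself then follows by the elementary combination described here.
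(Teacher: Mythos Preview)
Your proposal is correct and follows essentially the same route as the paper: first bound $\|u\|_{L^2(\mu)}$ by applying H\"older/Cauchy--Schwarz to the Green's representation \eqref{u ch2} to obtain $\|u\|_{L^2(\mu)}\leq\|G\|_{L^2(\mu\times\mu)}\|f\|_{L^2(\mu^{-1})}$, and then feed this into the weighted regularity Lemmas~\ref{lemma norme grad} and~\ref{lemma norme der sec} to recover the $H^2(\mu)$ bound with the stated constant. Your explicit observation that passing from $\|f\|_{L^2(\mu)}$ to $\|f\|_{L^2(\mu^{-1})}$ uses $\mu\leq 1$ is a detail the paper leaves implicit (it only records that $\mu$ is bounded), and your combined form of the Appendix estimate is exactly what one gets by substituting Lemma~\ref{lemma norme grad} into Lemma~\ref{lemma norme der sec} and summing.
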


\begin{proof}
From \eqref{u ch2} and by using H\"older inequality, it follows that
\begin{equation*}
\begin{split}
  \|u\|_{L^2(\mu)}^2 & = \ints_{\RR^2} \Big| \ints_{\RR^2} G(x,z;\xi,\zi)
  f(\xi,\zi) d\xi d\zi \Big|^2 \mu(x,z) dx dz \\
  & \leq \|f\|_{L^2 (\mu^{-1})}^2 \ints_{\RR^2} \ints_{\RR^2}
  | G(x,z;\xi,\zi)|^2 \mu(\xi,\zi) \mu(x,z) d\xi d\zi dx dz\\
  & = \|f\|_{L^2 (\mu^{-1})}^2 \|G\|_{L^2(\mu \times \mu)}^2,
\end{split}
\end{equation*}
and then we obtain \eqref{norma u} and \eqref{C ch3} from Lemmas
\ref{lemma norme grad} and \ref{lemma norme der sec}.
\end{proof}

\vspace{2em}

\begin{remark} \label{remark Lep}
{\rm In the next theorem we shall prove the existence of a solution of
$L_\ep u = f$. It will be useful to assume in general that $L_\ep$
is of the form
\begin{equation}\label{Lep u aij}
L_\ep = \sum_{i,j=1}^2 a_{ij}^\ep \de_{ij}  +  \sum_{i=1}^2
b_{i}^\ep \de_{i}  + c^\ep.
\end{equation}
This choice of $L_\ep$ is motivated by our project to treat
non-rectilinear waveguides. Our idea is that of transforming a
non-rectilinear waveguide into a rectilinear one by a change of
variables $\Gamma:\RR^2 \to \RR^2$.

For this reason, we suppose that $\Gamma$ is a $C^2$ invertible
function:
\begin{equation*}
\Gamma (s,t) = (x(s,t),z(s,t)).
\end{equation*}
By setting $w(s,t)=u(x,z)$, a solution $u$ of \eqref{helm} is
converted into a solution $w$ of
\begin{equation} \label{Lep per w}
| \nabla s|^2 w_{ss} + |\nabla t|^2 w_{tt} + 2 \nabla s \cdot \nabla
t \; w_{st} + \Delta s \cdot w_s + \Delta t \cdot w_t + c(s,t)^2 w =
F(s,t),
\end{equation}
where $c(s,t)=k n(x(s,t),z(s,t))$ and $F(s,t)= f (x(s,t),z(s,t))$.

If our waveguide is a slight perturbation of a rectilinear one, we
may choose $\Gamma$ as a perturbation of the identity map,
\begin{equation*}
\Gamma (s,t) = ( s+ \ep \varphi(s,t), t + \ep \psi(s,t) ),
\end{equation*}
and obtain $L_\ep w = F $ from \eqref{Lep per w}, where $L_\ep$ is
given by \eqref{Lep u aij}, with
\begin{equation}\label{coeff L_ep}
a_{ij}^\ep = \delta_{ij} + \ep \tilde{a}_{ij}^\ep , \ i,j=1,2;\quad
b_i= \ep \tilde{b}_i^\ep, \ i=1,2;\quad c^\ep = k^2 n_0 (x)^2 + \ep
\tilde{c}^\ep;
\end{equation}
we also may assume that
\begin{equation}\label{K ch3}
\left[ \sum_{i,j=1}^2 (\tilde{a}_{ij}^\ep )^2 \right]^{\frac{1}{2}},
\ \left[ \sum_{i=1}^2 (\tilde{b}_{i}^\ep )^2 \right]^{\frac{1}{2}},\
|\tilde{c}^\ep| \leq K \mu \quad \textmd{in } \RR^2,
\end{equation}
for some constant $K$ independent of $\ep$. }
\end{remark}

\vspace{2em}

\begin{theorem} \label{teo esistenza}
Let $L_\ep$ be as in Remark \ref{remark Lep} and let $f\in
L^2(\mu^{-1})$. Then there exists a positive number $\ep_0$ such
that, for every $\ep\in (0,\ep_0)$, equation $L_\ep u = f$ admits a
(weak) solution $u^\ep \in H^2(\mu)$.
\end{theorem}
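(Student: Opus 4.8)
The plan is to recast $L_\ep u = f$ as a fixed-point equation in the Banach space $H^2(\mu)$ and apply the contraction mapping theorem, as anticipated in \S\ref{section framework description}. From the structure \eqref{coeff L_ep} of the coefficients one computes
\[
\frac{L_0 - L_\ep}{\ep} = -\left(\sum_{i,j=1}^2 \tilde a_{ij}^\ep\, \partial_{ij} + \sum_{i=1}^2 \tilde b_i^\ep\, \partial_i + \tilde c^\ep\right),
\]
since the $\delta_{ij}$-part of $a_{ij}^\ep$ reconstructs $L_0$. Then $L_\ep u = f$ is equivalent to $u = T(u)$, where
\[
T(u) := L_0^{-1} f + \ep\, L_0^{-1}\!\left(\frac{L_0 - L_\ep}{\ep}\right)\! u,
\]
and $L_0^{-1}$ is the Green's operator \eqref{u ch2}. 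Note that $T(u)\in H^2(\mu)$ is well defined, because $f\in L^2(\mu^{-1})$ gives $L_0^{-1}f\in H^2(\mu)$ by Corollary \ref{lemma L0-1}.

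First I would show that the perturbation operator $\frac{L_0-L_\ep}{\ep}$ maps $H^2(\mu)$ continuously into $L^2(\mu^{-1})$, with a bound independent of $\ep$. This is the step where the weighted spaces do the essential work. Using the pointwise bounds \eqref{K ch3}, Cauchy--Schwarz gives $\big|\sum_{ij}\tilde a_{ij}^\ep \partial_{ij}u\big|^2 \le K^2\mu^2\,|\nabla^2 u|^2$, whence
\[
\left\|\sum_{ij}\tilde a_{ij}^\ep\, \partial_{ij}u\right\|_{L^2(\mu^{-1})}^2 \le K^2\ints_{\RR^2}\mu\,|\nabla^2 u|^2\,dx\,dz \le K^2\,\|u\|_{H^2(\mu)}^2 .
\]
The two factors of $\mu$ supplied by the coefficient bound cancel the factor $\mu^{-1}$ of the target norm and leave exactly one factor $\mu$, i.e. a piece of the $H^2(\mu)$ norm. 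The first- and zeroth-order terms are handled identically via the bounds on $\tilde b_i^\ep$ and $\tilde c^\ep$, producing $\|\nabla u\|_{L^2(\mu)}$ and $\|u\|_{L^2(\mu)}$; a triangle inequality then yields
\[
\left\|\left(\frac{L_0-L_\ep}{\ep}\right)u\right\|_{L^2(\mu^{-1})} \le \tilde K\,\|u\|_{H^2(\mu)}
\]
for a constant $\tilde K$ depending only on $K$ (hence independent of $\ep$).

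Composing with the continuity estimate $\|L_0^{-1}g\|_{H^2(\mu)} \le C\,\|g\|_{L^2(\mu^{-1})}$ of Corollary \ref{lemma L0-1}, and using that $T$ is affine, I obtain
\[
\|T(u) - T(v)\|_{H^2(\mu)} \le \ep\, C\,\tilde K\,\|u - v\|_{H^2(\mu)} .
\]
Choosing $\ep_0 := (C\tilde K)^{-1}$, the map $T$ is a contraction of $H^2(\mu)$ into itself for every $\ep\in(0,\ep_0)$, so the contraction mapping theorem furnishes a unique fixed point $u^\ep\in H^2(\mu)$. Unwinding the identity $u^\ep = L_0^{-1}g$ with $g = f + (L_0-L_\ep)u^\ep \in L^2(\mu^{-1})$ gives $L_0 u^\ep = g$ in the weak sense, that is $L_\ep u^\ep = f$, which proves the claim.

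I expect the first step to be the only real obstacle: establishing the uniform-in-$\ep$ boundedness of $\frac{L_0-L_\ep}{\ep}$ from $H^2(\mu)$ to $L^2(\mu^{-1})$. Everything downstream is soft — composition of bounded operators together with the Banach fixed-point theorem — whereas this estimate is precisely what dictates the quadratic appearance of $\mu$ in the hypotheses \eqref{K ch3} and explains why the weighted Sobolev framework is designed as it is.
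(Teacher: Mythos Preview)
Your proof is correct and follows essentially the same route as the paper: write $L_\ep = L_0 + \ep\tilde L_\ep$ (your $\frac{L_0-L_\ep}{\ep}$ is precisely $-\tilde L_\ep$), use the pointwise bounds \eqref{K ch3} together with Cauchy--Schwarz to show $\tilde L_\ep : H^2(\mu)\to L^2(\mu^{-1})$ is bounded by a constant depending only on $K$, compose with Corollary~\ref{lemma L0-1}, and apply the contraction mapping theorem with $\ep_0=(C\tilde K)^{-1}$. The only cosmetic difference is that the paper bundles the three terms into a single inequality and arrives at the constant $K$ itself, whereas you treat them separately and collect a $\tilde K$; either way the argument is the same.
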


\begin{proof}
We write
\begin{equation}\label{4.90}
L_\ep = L_0 + \ep \tilde{L}_\ep;
\end{equation}
clearly, the coefficients of $\tilde{L_\ep}$ are
$\tilde{a}_{ij}^\ep$, $\tilde{b}_{i}^\ep$ and $\tilde{c}^\ep$
defined in \eqref{coeff L_ep}. We can write \eqref{4.90} as
\begin{equation*}
  u+ \ep L_0^{-1} \tilde{L}_\ep u = L_0^{-1} f;
\end{equation*}
$L_0^{-1} f$ is nothing else than the solution of \eqref{helm}
defined in \eqref{u ch2}.

We shall prove that $L_0^{-1} \tilde{L}_\ep$ maps $H^2(\mu)$
continuously into itself. In fact, for $u\in H^2(\mu)$, we easily
have:
\begin{equation*}
\begin{split}
\|\tilde{L}_\ep u \|_{L^2(\mu^{-1})}^2 & \leq \ints_{\RR^2} \Bigg[
\sum_{i,j=1}^2 (\tilde{a}_{ij}^\ep )^2 \sum_{i,j=1}^2 |u_{ij}|^2 +
\sum_{i=1}^2 (\tilde{b}_{i}^\ep )^2 \sum_{i=1}^2 |u_i|^2  +
(\tilde{c}^\ep)^2 |u|^2 \Bigg] \mu^{-1}
dx dz \\
& \leq K^2 \|u\|_{H^2(\mu)}^2.
\end{split}
\end{equation*}
Moreover, Corollary \ref{lemma L0-1} implies that
\begin{equation*}
    \| L_0^{-1} f \|_{H^2(\mu)} \leq C \|f\|_{L^2 (\mu^{-1})},
\end{equation*}
and hence
\begin{equation*}
\| L_0^{-1} \tilde{L}_\ep u \|_{H^2(\mu)} \leq CK \|u \|_{H^2(\mu)}.
\end{equation*}
Therefore, we choose $\ep_0= (CK)^{-1}$ so that, for $\ep\in
(0,\ep_0)$, the operator $\ep L_0^{-1} \tilde{L}_\ep$ is a
contraction and hence our conclusion follows from Picard's fixed
point theorem.
\end{proof}

\vspace{2em}

\section{Numerical results}\label{section numerical}
In this section we show how to apply our results to compute the
first order approximation of the solution of the perturbed problem.
The example presented here has only an illustrative scope; a more
extensive and rigorous description of the computational issues can
be found in \cite{Ci1} and \cite{Ci2}, where we apply our results to
real-life optical devices.

\begin{figure}[htpb]
\centering \subfigure[The perturbed waveguide. The dashed lines show
the effect of $\Gamma$ on the plane. In particular, they show how a
rectangular grid in the $(s,t)$-plane is mapped in the
$(x,z)$-plane.]{
\includegraphics[width=0.45\textwidth]{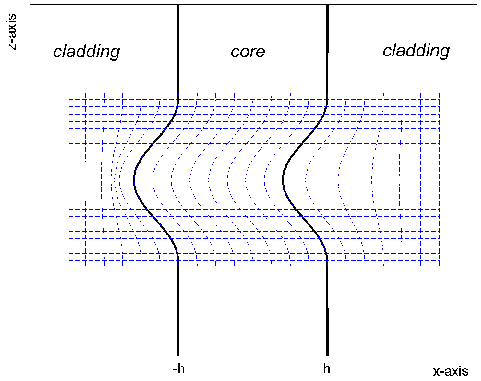}
 \label{Fig griglia} } \hfill \subfigure[Real part of $w^{(0)}$. Here, $w^{(0)}$
is a pure guided mode supported by the waveguide in the rectilinear
configuration.]{
\includegraphics[width=0.45\textwidth]{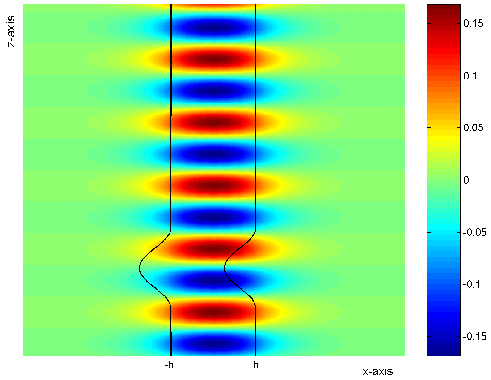} \label{Fig w0 real}
} \caption{The perturbed waveguide and the real part of $w^{(0)}$.}
\label{Fig1}
\end{figure}

In this section we study a perturbed slab waveguide as the one shown
in Fig.\ref{Fig griglia}. In the case of a rectilinear slab
waveguide it is possible to write the Green's formula explicitly and
numerically evaluate it (see \cite{MS}).

Having in mind the approach proposed in Remark \ref{remark Lep}, we
change the variables by using a $C^2-$function $\Gamma:\RR^2 \to
\RR^2$ of the following form:
\begin{equation*}
\Gamma (s,t) = (s, t + \ep S(s) T(t)),
\end{equation*}
where $S,T\in C_c^2(\RR)$; a good choice of $S$ and $T$ is
represented in Fig.\ref{Fig S e T}. In Fig.\ref{Fig griglia} we also
show how $\Gamma$ transforms the plane, by plotting in the
$(x,z)-$plane the image of a rectangular grid in the $(s,t)-$plane.

\begin{figure}
\centering \subfigure[The function $S$.]{
\includegraphics[width=0.4\textwidth]{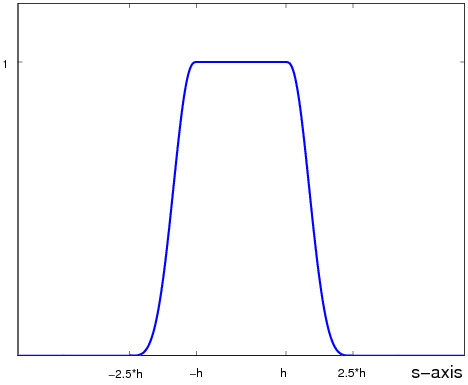} \label{Fig S}}
\hfill \subfigure[The function $T$.]{
\includegraphics[width=0.4\textwidth]{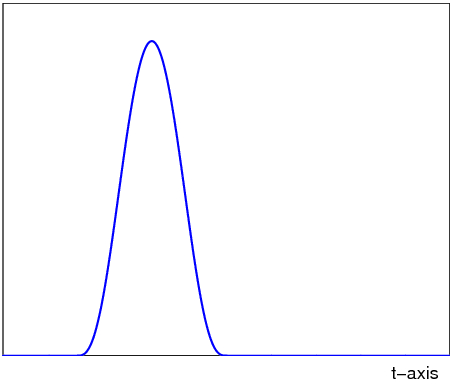} \label{Fig T}}

\caption{Our choice of the functions $S$ and $T$. Such a choice
corresponds to a perturbed waveguide as in Fig.\ref{Fig
griglia}.}\label{Fig S e T}

\end{figure}

By expanding $L_\ep$ and $w$ by their Neumann series, we find that
$w^{(0)}$ and $w^{(1)}$ (the zeroth and first order approximations
of $w$, respectively) satisfy
\begin{subequations} \label{w0 e w1}
\begin{equation}
\Delta w^{(0)} + k^2 n(s)^2 w^{(0)} = F(s,t),
\end{equation}
and
\begin{multline}
\Delta w^{(1)} + k^2 n(s)^2 w^{(1)} \\= -2 S'(s) T(t) w_{ss}^{(0)} -
2 S(s) T'(t) w_{st}^{(0)} - [S''(s) T(t) + S(s) T''(t)] w_{t}^{(0)},
\end{multline}
\end{subequations}
respectively.

In our simulations, we assume that $w^{(0)}$ is a pure guided mode
and calculate $w^{(1)}$ by using \eqref{w0 e w1} and the Green's
function \eqref{Green ch2}. In other words, we are taking a special
choice of $f$ and see what happens to the propagation of a pure
guided mode in the presence of an imperfection of the waveguide.

In Figures \ref{Fig w0 real}, \ref{Fig w1} and \ref{Fig w0 ep w1},
we set $k=5.0,\, h=0.2,\, n_{co}=2,\, n_{cl}=1$. With such
parameters, the waveguide supports two guided modes, corresponding
to the following values of the parameter $\lambda$:
$\lambda_1^s=23.7$ and $\lambda_1^a=73.5$.

As already mentioned, we are assuming that $w^{(0)}$ is a pure
guided mode. Here, $w^{(0)}$ is forward propagating and corresponds
to $\lambda_1^s$:
\begin{equation*}
w^{(0)}(s,t)=v_s(s,\lam_1^s) e^{i t \sqrt{k^2n_*^2 - \lam_1^s}};
\end{equation*}
the real part of $w^{(0)}$ is shown in Fig.\ref{Fig w0 real}.

Figures \ref{Fig w1 real} and \ref{Fig w1 abs} show the real part
and the absolute value of $w^{(1)}$, respectively. We do not write
here the numerical details of our computation and refer to
\cite{Ci1} and \cite{Ci2} for a more detailed description.

\begin{figure}
\centering \subfigure[Real part of $w^{(1)}$.]{
\includegraphics[width=0.45\textwidth]{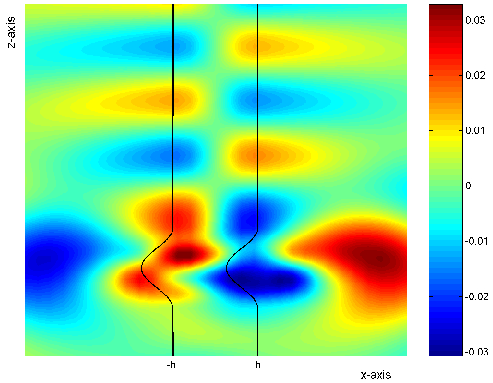}
\label{Fig w1 real}} \hfill \subfigure[Modulus of $w^{(1)}$.]{
\includegraphics[width=0.45\textwidth]{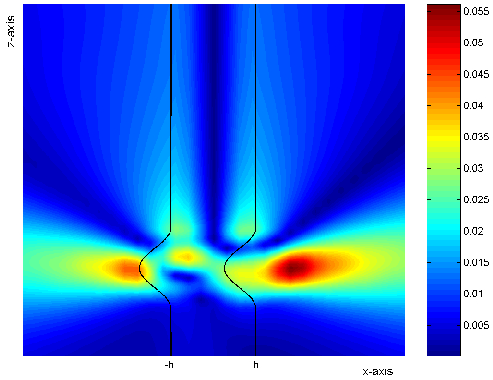}
\label{Fig w1 abs}} \caption{The real part and modulus of $w^{(1)}$
(the first order approximation of $w$).} \label{Fig w1}
\end{figure}

In Figures \ref{Fig w0 ep w1 real} and \ref{Fig w0 ep w1 abs} we
show the real part and the absolute value of $w^{(0)}+\ep w^{(1)}$,
respectively. Here, we choose $\ep=1$ to emphasize the effect of the
perturbation on the wave propagation. As is clear from Theorem
\ref{teo esistenza}, our existence result holds for $\ep\in
[0,\ep_0]$, where $\ep_0= (CK)^{-1}$ (which will be presumably less
than $1$). The computation of $\ep_0$ and the convergence of the
Neumann series related to $w$ have not been considered here; again,
we refer to \cite{Ci1} and \cite{Ci2} for a detailed study of such
issues.

\begin{figure}
\centering \subfigure[Real part of $w^{(0)} + \ep w^{(1)}$.]{
\includegraphics[width=0.45\textwidth]{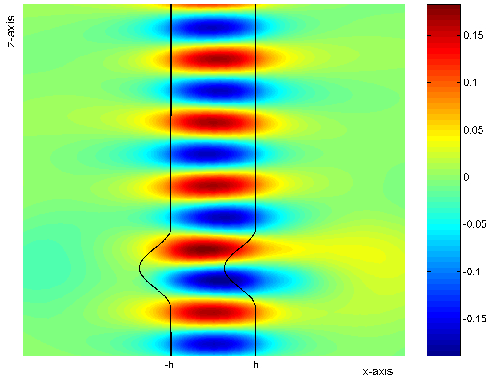}
\label{Fig w0 ep w1 real}} \hfill \subfigure[Modulus of $w^{(0)} +
\ep w^{(1)}$.]{
\includegraphics[width=0.45\textwidth]{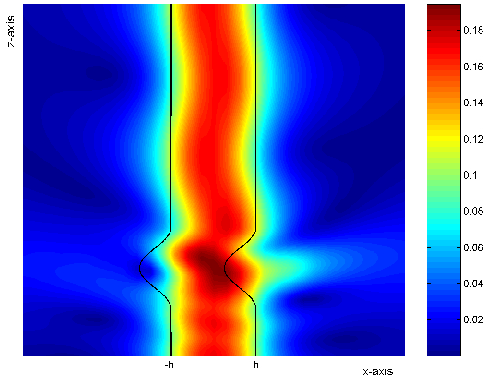}
\label{Fig w0 ep w1 abs}} \caption{The real part and modulus of
$w^{(0)} + \ep w^{(1)}$. The pictures clearly show the effect of a
perturbation of the waveguide: due to the presence of an
imperfection, the waveguide does not support the pure guided mode
$w^{(0)}$ and the other supported guided mode and the radiating
energy appear.} \label{Fig w0 ep w1}
\end{figure}


\section{Conclusions}
In this paper, we studied the electromagnetic wave propagation for
non-rectilinear waveguides, assuming that the waveguide is a small
perturbation of a rectilinear one. Thanks to the knowledge of a
Green's function for the rectilinear configuration, we provided a
mathematical framework by which the existence of a solution for the
scalar 2-D Helmholtz equation in the perturbed case is proven. Our
work is based on careful estimates in suitable weighted Sobolev
spaces which allow us to use a standard fix-point argument.

For the case of a slab waveguide (piecewise constant indices of
refraction), numerical examples were also presented. We showed that
our approach provide a method for evaluating how imperfections of
the waveguide affect the wave propagation of a pure guided mode.

In a forthcoming paper, we will address the computational issues
arising from the design of optical devices.

\vspace{2em}

\appendix\section{Regularity results}\label{section estimates RN} In this section we study the global
regularity of weak solutions of the Helmholtz equation. Since our
results hold in $\RR^N$, $N\geq2$, it will be useful to denote a
point in $\RR^N$ by $x$, i.e. $x=(x_1,\ldots,x_N) \in \RR^N$.

The results in this section can be found in literature in a more
general context for $N\geq 3$ (see \cite{Ag}). Here, under stronger
assumptions on $n$ and $\mu$, we provide an {\it ad hoc} treatment
that holds for $N\geq 2$.

We will suppose $f\in L^2 (\mu^{-1})$. Since $\mu$ is bounded, it is
clear that $f \in L^2 (\mu)$ too.

\vspace{1em}

\begin{lemma} \label{lemma norme grad}
Let $u\in H^1_{\textmd{loc}} (\RR^N)$ be a weak solution of
\begin{equation}\label{helmholtz R N}
\Delta u + k^2 n(x)^2 u = f , \quad x \in \RR^N,
\end{equation}
with $n\in L^\infty (\RR^N)$. Let $\mu$ satisfy the assumptions in
\eqref{mu1}. Then
\begin{equation}\label{norma gradiente u}
\ints_{\RR^N} |\nabla u|^2 \mu dx \leq \frac{1}{2} \ints_{\RR^N}
|f|^2 \mu dx + \left( 2C_2 + k^2 n_*^2 + \frac{1}{2} \right)
\ints_{\RR^N} |u|^2 \mu dx,
\end{equation}
where $n_*= \| n \|_{L^\infty (\RR^N)}$.
\end{lemma}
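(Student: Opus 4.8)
The plan is to derive \eqref{norma gradiente u} by testing the equation against $\overline u\,\mu$ and integrating by parts. I would first reduce to the case $\ints_{\RR^N}|u|^2\mu\,dx<+\infty$, since otherwise the right-hand side is infinite and the inequality is trivial. Multiplying \eqref{helmholtz R N} by $\overline u\,\mu$, integrating over $\RR^N$ and applying Green's first identity gives, at the formal level,
\begin{equation*}
-\ints_{\RR^N}|\nabla u|^2\mu\,dx-\ints_{\RR^N}\overline u\,\nabla u\cdot\nabla\mu\,dx+k^2\ints_{\RR^N} n^2|u|^2\mu\,dx=\ints_{\RR^N} f\,\overline u\,\mu\,dx.
\end{equation*}
The delicate point is the cross term $\ints_{\RR^N}\overline u\,\nabla u\cdot\nabla\mu\,dx$: estimating it directly would force the constant $C_1$ to appear through $|\nabla\mu|\le C_1\mu$, whereas \eqref{norma gradiente u} involves only $C_2$.

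To avoid $C_1$ I would take real parts and use the identity $\RE(\overline u\,\nabla u)=\tfrac12\nabla|u|^2$, so that a \emph{second} integration by parts converts the cross term into $-\tfrac12\ints_{\RR^N}|u|^2\Delta\mu\,dx$; this trades the first derivatives of $\mu$ for second derivatives and is precisely why the estimate depends on $C_2$ (via $|\nabla^2\mu|\le C_2\mu$, hence a bound on $|\Delta\mu|$). Taking real parts and rearranging then yields
\begin{equation*}
\ints_{\RR^N}|\nabla u|^2\mu\,dx=\tfrac12\ints_{\RR^N}|u|^2\Delta\mu\,dx+k^2\ints_{\RR^N} n^2|u|^2\mu\,dx-\RE\ints_{\RR^N} f\,\overline u\,\mu\,dx.
\end{equation*}
I would then estimate the three terms on the right: the first by the Hessian hypothesis $|\nabla^2\mu|\le C_2\mu$, the second by $n^2\le n_*^2$, and the last by Cauchy--Schwarz followed by Young's inequality, $|\ints_{\RR^N} f\,\overline u\,\mu\,dx|\le\tfrac12\ints_{\RR^N}|f|^2\mu\,dx+\tfrac12\ints_{\RR^N}|u|^2\mu\,dx$, which produces the factor $\tfrac12$ in front of $\ints_{\RR^N}|f|^2\mu\,dx$ and the additive $\tfrac12$ in the coefficient of $\ints_{\RR^N}|u|^2\mu\,dx$. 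Collecting the constants gives the stated coefficient $2C_2+k^2n_*^2+\tfrac12$.

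The main obstacle is rigor rather than computation: since $u$ is only in $H^1_{\mathrm{loc}}(\RR^N)$ and the domain is unbounded, the integrations by parts above are not immediately justified, and $\ints_{\RR^N}|\nabla u|^2\mu\,dx$ is a priori exactly the quantity we wish to bound. I would make the argument legitimate by testing instead with $\overline u\,\chi_R^2\,\mu$, where $\chi_R$ is a smooth cutoff equal to $1$ on $B_R$ and supported in $B_{2R}$, so that all integrals are over compact sets and Green's identity is valid; admissibility of this test function follows from $\mu\in C^2$ and the local $H^1$ regularity of $u$. This introduces error terms carrying $\nabla\chi_R$; using $|\nabla\chi_R|\le C/R$ together with the reduction $\ints_{\RR^N}|u|^2\mu\,dx<+\infty$, a Young (Caccioppoli-type) absorption lets me dominate the annular gradient error by $\tfrac{C}{R^2}\ints_{B_{2R}}|u|^2\mu\,dx$, which tends to $0$. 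Letting $R\to+\infty$ and using monotone convergence for $\ints_{B_R}|\nabla u|^2\mu\,dx$ then recovers the sharp inequality in the limit.
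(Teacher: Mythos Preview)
Your proposal is correct and follows essentially the same route as the paper: test the weak formulation with $\bar u$ times a compactly supported truncation of $\mu$, take real parts and integrate by parts once more to convert $\RE\int \bar u\,\nabla u\cdot\nabla(\cdot)$ into $-\tfrac12\int|u|^2\Delta(\cdot)$ (thereby replacing $C_1$ by $C_2$), apply Young to the $f\bar u$ term, and pass to the limit by monotone convergence. The only cosmetic difference is the choice of cutoff: the paper sets $\mu_m(x)=\mu(x)\,\eta(x/m)$ with a fixed $\eta\in C_0^\infty$ and tracks $|\nabla^2\mu_m|\le (C_2+2C_1/m+1/m^2)\mu$ explicitly, whereas you use $\chi_R^2\mu$; note that with your double integration by parts the error terms involve only $|u|^2$ against derivatives of $\chi_R^2\mu$, so no Caccioppoli absorption of $|\nabla u|^2$ is actually needed.
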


\begin{proof}
Let $\eta\in C_0^\infty(\RR^N)$ be such that
\begin{equation} \label{eta}
\eta(0)=1, \quad 0 \leq \eta \leq 1, \quad |\nabla \eta| \leq
1,\quad |\nabla^2 \eta| \leq 1,
\end{equation}
and consider the function defined by
\begin{equation}\label{mu_m def}
  \mu_m (x) = \mu(x) \eta \left( \frac{x}{m}\right).
\end{equation}
Then $\mu_m(x)$ increases with $m$ and converges to $\mu(x)$ as
$m\to +\infty$; furthermore
\begin{equation} \label{mu2}
\begin{array}{l}
 \displaystyle |\nabla \mu_m(x) | \leq  |\nabla \mu(x) | + \frac{1}{m} \mu(x) \leq \left(
 C_1 + \frac{1}{m} \right) \mu(x), \\
 \displaystyle |\nabla^2 \mu_m(x) | \leq  |\nabla^2 \mu(x) | + \frac{2}{m}  |\nabla \mu(x) | +
 \frac{1}{m^2} \mu(x) \leq \left( C_2 + \frac{2 C_1}{m} + \frac{1}{m^2} \right) \mu(x),
\end{array}
\end{equation}
for every $x \in \RR^N$.

Since $u$ is a weak solution of \eqref{helmholtz R N}, we have that
\begin{equation*}
  \ints_{\RR^N} \nabla u \cdot \nabla \phi dx - k^2 \ints_{\RR^N} n(x)^2 u \phi dx = - \ints_{\RR^N} f \phi dx,
\end{equation*}
for every $\phi \in H^1_{\textmd{loc}} (\RR^N)$. We choose $\phi =
\bar{u} \mu_m$ and obtain by Theorem 6.16 in \cite{LL}:
\begin{equation}\label{triangolo}
\ints_{\RR^N} |\nabla u|^2 \mu_m dx = - \ints_{\RR^N} \bar{u} \nabla
u \cdot \nabla \mu_m dx + k^2 \ints_{\RR^N} n(x)^2 |u|^2 \mu_m dx -
\ints_{\RR^N} f \bar{u} \mu_m dx.
\end{equation}
Integration by parts gives:
\begin{equation*}
   \RE \ints_{\RR^N} \bar{u} \nabla u \cdot \nabla \mu_m dx =
   - \frac{1}{2} \ints_{\RR^N} |u|^2 \Delta \mu_m dx;
\end{equation*}
hence, by considering the real part of \eqref{triangolo}, we obtain:
\begin{multline*}
\ints_{\RR^N} |\nabla u|^2 \mu_m dx \leq 2 \left(C_2 +
\frac{2C_1}{m} + \frac{1}{m^2} \right) \ints_{\RR^N} |u|^2 \mu dx +
k^2n_*^2 \ints_{\RR^N} |u|^2 \mu_m dx \\ + \left( \ \ints_{\RR^N}
|f|^2 \mu_m dx  \right)^{\frac{1}{2}} \left( \ \ints_{\RR^N} |u|^2
\mu_m dx  \right)^{\frac{1}{2}}  ;
\end{multline*}
here we have used \eqref{mu2} and H\"{o}lder inequality.

Young inequality and the fact that $\mu_m \leq \mu$ then yield:
\begin{equation*}
\ints_{\RR^N} |\nabla u|^2 \mu_m dx \leq \left[ 2\left(C_2 +
\frac{2C_1}{m} + \frac{1}{m^2} \right) + k^2 n_*^2 + \frac{1}{2}
\right] \ints_{\RR^N} |u|^2 \mu dx + \frac{1}{2}  \ints_{\RR^N}
|f|^2 \mu dx.
\end{equation*}
The conclusion then follows by the monotone convergence theorem.
\end{proof}

\begin{lemma} \label{lemma identita differenziale}
The following identity holds for every $u\in H^2_{\textmd{loc}}
(\RR^N)$ and every $\phi \in C_0^2 (\RR^N)$:
\begin{equation}\label{identita differenziale}
\ints_{\RR^N} |\Delta u|^2 \phi dx + \ints_{\RR^N} |\nabla u |^2
\Delta \phi dx = \ints_{\RR^N} |\nabla^2 u|^2 \phi dx + \RE
\ints_{\RR^N} (\nabla^2 \phi \nabla u ,\nabla u ) dx.
\end{equation}
\end{lemma}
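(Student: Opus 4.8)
The plan is to establish \eqref{identita differenziale} first for smooth $u$ by repeated integration by parts, and then recover the stated generality $u\in H^2_{\textmd{loc}}(\RR^N)$ by mollification. The point that makes the density step legitimate is that \emph{every} term occurring in \eqref{identita differenziale} involves at most second derivatives of $u$, so both sides are continuous functionals of $u$ with respect to $H^2_{\textmd{loc}}$ convergence on the compact set $\textmd{supp}\,\phi$. Hence, writing $u_\delta=u\star\rho_\delta$ for a standard mollifier $\rho_\delta$, one has $u_\delta\to u$ in $H^2$ on a neighbourhood of $\textmd{supp}\,\phi$; if the identity holds for each $u_\delta$, letting $\delta\to0$ yields it for $u$. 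Thus I may assume $u\in C^\infty$.

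For smooth $u$ I would expand both quadratic quantities in components (writing $\de_i$ for the partial derivatives, so that $\de_{ij}$ are second derivatives), namely
\begin{equation*}
\ints_{\RR^N}|\Delta u|^2\phi\,dx=\sum_{i,j}\ints_{\RR^N}\de_{ii}u\,\de_{jj}\bar u\,\phi\,dx,\qquad \ints_{\RR^N}|\nabla^2 u|^2\phi\,dx=\sum_{i,j}\ints_{\RR^N}\de_{ij}u\,\de_{ij}\bar u\,\phi\,dx.
\end{equation*}
In each summand of the first sum I integrate by parts once in $x_i$ (peeling one $\de_i$ off $\de_{ii}u$), and in each summand of the second I integrate by parts once in $x_i$ (peeling one $\de_i$ off $\de_{ij}u$). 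Subtracting, the two families produce third-order terms $-\sum_{ij}\ints_{\RR^N}\de_i u\,\de_{ijj}\bar u\,\phi$ and $+\sum_{ij}\ints_{\RR^N}\de_j u\,\de_{iij}\bar u\,\phi$; the crucial observation is that, after relabelling $i\leftrightarrow j$ and using commutativity of partial derivatives ($\de_{jji}=\de_{ijj}$), these \emph{cancel}. This cancellation is precisely what collapses the a priori third-order identity to the second-order one claimed.

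What survives are terms with two derivatives on $\bar u$ and one on $\phi$, namely $-\sum_{ij}\ints_{\RR^N}\de_i u\,\de_{jj}\bar u\,\de_i\phi$ and $+\sum_{ij}\ints_{\RR^N}\de_j u\,\de_{ij}\bar u\,\de_i\phi$. Integrating the first of these once more by parts in $x_j$ splits off a piece that recombines with the second into $\sum_{ij}\ints_{\RR^N}\de_i\phi\,\de_i(\de_j u\,\de_j\bar u)=\sum_i\ints_{\RR^N}\de_i\phi\,\de_i|\nabla u|^2$, which after a final integration by parts equals $-\ints_{\RR^N}|\nabla u|^2\Delta\phi\,dx$; the leftover piece is exactly $\sum_{ij}\ints_{\RR^N}\de_{ij}\phi\,\de_i u\,\de_j\bar u\,dx$. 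Because the Hessian $\nabla^2\phi$ is real and symmetric, this last expression coincides with $\ints_{\RR^N}(\nabla^2\phi\,\nabla u,\nabla u)\,dx$ and is itself real (it equals its own conjugate after relabelling), so taking real parts is harmless; rearranging then gives \eqref{identita differenziale}.

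The algebra above is routine bookkeeping once the index conventions are fixed, so I expect the only genuinely delicate point to be \emph{regularity}: the intermediate integrations by parts invoke third derivatives of $u$, which an $H^2_{\textmd{loc}}$ function need not have, so the reduction to smooth $u$ by mollification is essential rather than cosmetic. The hypothesis $\phi\in C_0^2(\RR^N)$ does the remaining work for free, since its compact support kills all boundary terms and confines every integral to a fixed compact set, which is what legitimizes the passage $\delta\to0$ in the limiting argument.
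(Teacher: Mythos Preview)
Your proof is correct and follows essentially the same route as the paper's: both reduce to smooth $u$ by an approximation argument and then carry out the integration-by-parts computation, the paper packaging it as a single pointwise divergence-form identity to which the divergence theorem is applied, while you perform the same steps sequentially. Your explicit remark that the quadratic form $(\nabla^2\phi\,\nabla u,\nabla u)$ is automatically real (so the $\RE$ is redundant) is a nice addition.
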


\begin{proof}
It is obvious that, without loss of generality, we can assume that
$u\in C^3 (\RR^N)$; a standard approximation argument will then lead
to the conclusion.

For $u\in C^3 (\RR^N)$, \eqref{identita differenziale} follows by
integrating over $\RR^N$ the differential identity
\begin{multline*}
  \phi \sum_{i,j=1}^N u_{ii} \bar{u}_{jj} - \phi \sum_{i,j=1}^N u_{ij} \bar{u}_{ij} + \sum_{i,j=1}^N u_i \bar{u}_i
  \phi_{jj} - \RE \sum_{i,j=1}^N u_i \bar{u}_j \phi_{ij} \\
  = \RE \Bigg\{ \sum_{i,j=1}^N \left[ (\phi \bar{u}_j u_{ii})_j + (u_i \bar{u}_i \phi_j)_j -
  (\phi \bar{u}_j u_{ij})_i - (u_j \bar{u}_i \phi_j)_i  \right] \Bigg\},
\end{multline*}
and by divergence theorem.
\end{proof}

\begin{lemma}\label{lemma norme der sec}
Let $u\in H^1 (\mu)$ be a weak solution of \eqref{helmholtz R N}.
Then
\begin{multline}\label{norma deriv sec}
\ints_{\RR^N} |\nabla^2 u|^2 \mu dx \leq 2 \ints_{\RR^N} |f|^2 \mu
dx + 2k^4 n_*^4 \ints_{\RR^N} |u|^2 \mu dx + 4 C_2 \ints_{\RR^N}
|\nabla u |^2 \mu dx.
\end{multline}
where $C_2$ is the constant in \eqref{mu2}.
\end{lemma}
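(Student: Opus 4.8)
The plan is to derive \eqref{norma deriv sec} by testing the differential identity of Lemma \ref{lemma identita differenziale} against the compactly supported cut-offs $\mu_m$ introduced in \eqref{mu_m def}, and then letting $m\to+\infty$. Before doing so I must check that the identity is applicable, i.e. that $u\in H^2_{\textmd{loc}}(\RR^N)$: this is not among the hypotheses (only $u\in H^1(\mu)$ is assumed), but it follows from interior elliptic regularity. Indeed, rewriting \eqref{helmholtz R N} as $\Delta u = f - k^2 n^2 u$, the right-hand side lies in $L^2_{\textmd{loc}}(\RR^N)$, since $f\in L^2(\mu^{-1})\subset L^2_{\textmd{loc}}$, $n\in L^\infty$ and $u\in L^2_{\textmd{loc}}$; the standard $L^2$ interior estimate then gives $u\in H^2_{\textmd{loc}}(\RR^N)$. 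Moreover, since $\mu\in C^2(\RR^N)$ and $\eta(\cdot/m)\in C_0^\infty$, each $\mu_m\in C_0^2(\RR^N)$ is an admissible choice of $\phi$ in \eqref{identita differenziale}.

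Next I would apply \eqref{identita differenziale} with $\phi=\mu_m$ and solve for the Hessian term:
\begin{equation*}
\ints_{\RR^N}|\nabla^2 u|^2\mu_m\,dx = \ints_{\RR^N}|\Delta u|^2\mu_m\,dx + \ints_{\RR^N}|\nabla u|^2\Delta\mu_m\,dx - \RE\ints_{\RR^N}(\nabla^2\mu_m\nabla u,\nabla u)\,dx.
\end{equation*}
The first term on the right is handled purely algebraically through the equation: since $\Delta u=f-k^2n^2u$, the elementary inequality $|a+b|^2\le 2|a|^2+2|b|^2$ together with $n\le n_*$ gives $|\Delta u|^2\le 2|f|^2+2k^4n_*^4|u|^2$, whence, using $\mu_m\le\mu$,
\begin{equation*}
\ints_{\RR^N}|\Delta u|^2\mu_m\,dx \le 2\ints_{\RR^N}|f|^2\mu\,dx + 2k^4n_*^4\ints_{\RR^N}|u|^2\mu\,dx.
\end{equation*}

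For the two remaining terms I would use only the pointwise control \eqref{mu2} of the derivatives of $\mu_m$. Bounding $|\Delta\mu_m|$ by the Hessian norm and the quadratic form by Cauchy--Schwarz, $|(\nabla^2\mu_m\nabla u,\nabla u)|\le|\nabla^2\mu_m||\nabla u|^2$, both are dominated by $\bigl(C_2+\tfrac{2C_1}{m}+\tfrac{1}{m^2}\bigr)\mu\,|\nabla u|^2$, so that together they contribute at most $4\bigl(C_2+\tfrac{2C_1}{m}+\tfrac{1}{m^2}\bigr)\ints_{\RR^N}|\nabla u|^2\mu\,dx$, the precise coefficient being fixed by the norm convention adopted for $\nabla^2\mu$ in \eqref{mu1}--\eqref{mu2}. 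Finally I would let $m\to+\infty$: by monotone convergence the left-hand side $\ints_{\RR^N}|\nabla^2 u|^2\mu_m\,dx$ increases to $\ints_{\RR^N}|\nabla^2 u|^2\mu\,dx$, while the error terms $\tfrac{2C_1}{m}+\tfrac{1}{m^2}$ vanish, producing exactly \eqref{norma deriv sec}.

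The step I expect to require the most care is twofold: first, the justification that $u\in H^2_{\textmd{loc}}$ so that Lemma \ref{lemma identita differenziale} may legitimately be invoked (this is a regularity bootstrap rather than a computation, and it is what lets one integrate by parts twice); and second, the honest book-keeping of the constants in the two weight-derivative terms, since matching the coefficient $4C_2$ depends on how $|\Delta\mu_m|$ is estimated against $|\nabla^2\mu_m|$ and on the combination of that bound with the Cauchy--Schwarz estimate for the Hessian quadratic form. Everything else reduces to the algebraic use of the equation and to the monotone limit already employed in Lemma \ref{lemma norme grad}.
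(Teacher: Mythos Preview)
Your proposal is correct and follows essentially the same route as the paper: invoke interior elliptic regularity to gain $u\in H^2_{\textmd{loc}}$, apply the identity of Lemma \ref{lemma identita differenziale} with $\phi=\mu_m$, use the equation to bound $|\Delta u|^2\le 2|f|^2+2k^4n_*^4|u|^2$, control the two weight-derivative terms via \eqref{mu2}, and pass to the limit by monotone convergence. Your cautionary remark about the book-keeping of the constant $4C_2$ is well placed, since matching it exactly does depend on the norm convention for $|\nabla^2\mu|$ and on how $|\Delta\mu_m|$ is compared to it.
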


\begin{proof}
From well-known interior regularity results on elliptic equations
(see Theorem 8.8 in \cite{GT}), we have that if $u \in
H^1_{\textmd{loc}} (\RR^N)$ is a weak solution of \eqref{helmholtz R
N}, then $u \in H^2_{\textmd{loc}} (\RR^N)$. Then we can apply Lemma
\ref{lemma identita differenziale} to $u$ by choosing $\phi =
\mu_m$:
\begin{equation*}
\ints_{\RR^N} |\Delta u|^2 \mu_m dx + \ints_{\RR^N} |\nabla u |^2
\Delta \mu_m dx = \ints_{\RR^N} |\nabla^2 u|^2 \mu_m dx + \RE
\ints_{\RR^N} (\nabla^2 \mu_m \nabla u ,\nabla u ) dx.
\end{equation*}
From \eqref{helmholtz R N}, \eqref{mu2} and the above formula, we
have
\begin{equation*}
\begin{split}
\ints_{\RR^N} |\nabla^2 u|^2 \mu_m dx & = \ints_{\RR^N} |f - k^2
n(x)^2 u |^2 \mu_m dx + \ints_{\RR^N} |\nabla u
|^2 \Delta \mu_m dx - 2 \RE \ints_{\RR^N} (\nabla^2 \mu_m \nabla u, \nabla u ) dx \\
& \leq 2 \ints_{\RR^N} |f|^2 \mu_m dx + 2 k^4 n_*^4 \ints_{\RR^N} |u|^2 \mu_m dx \\
& \hspace{0.7cm} + 2 \left(C_2 + \frac{2C_1}{m} + \frac{1}{m^2}
\right) \ints_{\RR^N} |\nabla u |^2 \mu dx + 2 \ints_{\RR^N} |\nabla^2 \mu_m| |\nabla u|^2 dx \\
& \leq 2 \ints_{\RR^N} |f|^2 \mu_m dx + 2 k^4 n_*^4 \ints_{\RR^N} |u|^2 \mu_m dx \\
& \hspace{0.7cm} + 4 \left(C_2 + \frac{2C_1}{m} + \frac{1}{m^2}
\right) \ints_{\RR^N} |\nabla u |^2 \mu dx.
\end{split}
\end{equation*}
Since $\mu_m \leq \mu$, the proof is completed by taking the limit
as $m\to \infty$.
\end{proof}

\subsection*{Acknowledgments} Part of this work was written while
the first author was visiting the Institute of Mathematics and its
Applications (University of Minnesota). He wishes to thank the
Institute for the kind hospitality. The authors are also grateful to
Prof. Fadil Santosa (University of Minnesota) for several helpful
discussions.

\end{document}